\newtheorem{theorem}{Theorem}
\newtheorem{proposition}[theorem]{Proposition}
\newenvironment{proof}[1][Proof]{\textbf{#1.} }
     {\    \rule{0.5em}{0.5em}}
\newcommand{\e}{\ensuremath{\mathrm{e}}}
\newcommand{\tr}{\mathrm{tr}}
\begin{document}

\title{Error analysis of splitting methods for the time dependent
Schr\"odinger equation}

\author{Sergio Blanes$^{1}$\thanks{Email: \texttt{serblaza@imm.upv.es}}
   \and
%    author two information
  Fernando Casas$^{2}$\thanks{Email: \texttt{Fernando.Casas@uji.es}}
   \and
Ander Murua$^{3}$\thanks{Email: \texttt{Ander.Murua@ehu.es}}
   }

%\date{}
\maketitle

\begin{abstract}

A typical procedure to integrate numerically the time dependent Schr\"o\-din\-ger equation involves two
stages. In the first one carries out a space discretization of the continuous problem. This results 
in the linear system of differential equations 
$i du/dt = H u$, where $H$ is a real symmetric matrix, whose solution with
initial value $u(0) = u_0 \in \mathbb{C}^N$ is given by $u(t) = \e^{-i t H} u_0$. Usually, this
exponential matrix is expensive to evaluate, so that time stepping methods to construct approximations to
$u$ from time $t_n$ to $t_{n+1}$ are considered in the second phase of the procedure. Among them,
schemes involving multiplications of the matrix $H$ with vectors, such as Lanczos and Chebyshev methods, are particularly efficient.

In this work we consider a particular class of splitting methods which also involves only products $Hu$. We
carry out an error analysis of these integrators and propose a 
strategy which allows us to construct 
 different splitting symplectic methods of different order (even of order zero) possessing a large stability interval that can be adapted to different space regularity conditions and different accuracy ranges of the spatial discretization.
The validity of the 
procedure and the performance of the resulting schemes are illustrated on several numerical examples.

\vspace*{0.6cm}

\begin{description}
 \item $^1$Instituto de Matem\'atica Multidisciplinar,
  Universidad Polit\'ecnica de Valencia, E-46022  Valencia, Spain.
 \item $^2$Institut de Matem\`atiques i Aplicacions de Castell\'o and
  Departament de Matem\`atiques, Universitat Jaume I,
  E-12071 Castell\'on, Spain.
 \item $^3$Konputazio Zientziak eta A.A. saila, Informatika
Fakultatea, EHU/UPV, Donostia/San Sebasti\'an, Spain.
\end{description}

\end{abstract}

\section{Introduction}

To describe and understand the dynamics and evolution of many basic atomic and molecular
phenomena, their time dependent quantum mechanical treatment is essential. Thus,
for instance, in molecular dynamics, the construction of models and simulations of molecular encounters
can benefit a good deal from time dependent computations. The same applies to scattering
processes such as atom-diatom collisions and triatomic photo-dissociation and, in general, to
quantum mechanical phenomena where there is an initial state that under the influence of a given
potential evolves through time to achieve a final asymptotic state (e.g., chemical reactions,
unimolecular breakdown, desorption, etc.). This requires, of course, to solve the time dependent
Schr\"odinger equation ($\hbar=1$)
\begin{equation}   \label{Schr0}
  i \frac{\partial}{\partial t} \psi (x,t)  = \hat{H} \psi (x,t),
\end{equation}
where $\hat{H}$ is the Hamiltonian operator,
$\psi:\mathbb{R}^d\times\mathbb{R} \longrightarrow \mathbb{C}$ is
the wave function representing the state of the system and the initial state is $\psi(x,0)=\psi_0(x)$. Usually
\begin{equation}   \label{Ham1}
  \hat{H} = \hat{T}(\hat{P}) + \hat{V}(\hat{X}) \equiv \frac{1}{2\mu} \hat{P}^2 + \hat{V}(\hat{X})
\end{equation}
and the operators
$\hat{X}$, $\hat{P}$ are defined by their actions on $\psi(x,t)$ as
\[
  \hat{X} \psi (x,t) = x\, \psi (x,t), \quad\quad
  \hat{P} \ \psi (x,t) = -i \, \nabla \psi (x,t).
\]
The solution of (\ref{Schr0}) provides all dynamical information on the physical system at any time. It
can be expressed as
\begin{equation}  \label{evol1}
  \psi(x,t) = \hat{U}(t) \psi_0(x),
\end{equation}
where $\hat{U}$ represents the evolution operator, which is linear and satisfies the equation
$i \, d\hat{U}(t)/dt =
\hat{H} \hat{U}(t)$ with $\hat{U}(0) = I$. Since
the  Hamiltonian is explicitly time independent, the evolution operator is given formally by
\begin{equation}  \label{evol2}
   \hat{U}(t) = \e^{-i t \hat{H}}.
\end{equation}
In practice, however, the Schr\"odinger equation has to be solved numerically, and the procedure involves
basically two steps. The first one consists in considering a faithful discrete spatial representation of the  initial wave function $\psi_0(x)$ and the operator $\hat{H}$
on an appropriately constructed grid. Once this spatial discretization is built,
the initial wave function is propagated in time until the end of the dynamical event. It is on the second stage
of this process where we will concentrate our analysis. 

As a result of the discretization of  eq. (\ref{Schr0}) in space, one is left with a linear equation 
$i du/dt = H u$,
with a Hermitian matrix $H$ of large dimension and large norm. Since evaluating exactly 
the exponential $\exp(-i t H)$ is computationally expensive,
approximation methods requiring only matrix-vector products with $H$ are particularly
appropriate \cite{lubich08fqt}. Among them, the class of splitting symplectic methods has received
considerable attention in the literature 
\cite{gray96sit,mclachlan97osp,zhu96nmw,blanes06sso,blanes08otl}.
 In this case $\exp(-i t H)$ is approximated by a composition of symplectic matrices. 
While it has been shown that stable high order  methods belonging to this family do exist, 
such high degree of accuracy may be disproportionate in comparison
with the error involved in the spatial discretization, and also inappropriate particularly when the problem at hand
involves non-smooth solutions, as high order methods make small phase errors in the low frequencies but much larger errors in the high frequencies. 
An error analysis of this family of
integrators, in particular,  could help one to design different efficient time integrators adapted to different accuracy requirements and spacial regularity situations. 

The analysis
carried out in the present paper could be considered a step forward in this direction. 
 We present a strategy which allows us to construct 
 different splitting symplectic methods of different order and large stability interval  (with a large number of stages) that can be adapted to different space regularity conditions and different accuracy ranges of the spatial discretization. When this
regularity degree is low, sometimes the best option is provided by methods of order zero.

Since the splitting methods we analyze here only involve products of the matrix $H$ with vectors, they
belong to the same class of integrators as the Chebyshev and Lanczos methods, in the sense that
all of them approximate $\exp(-i t H)u_0$ by linear combinations of terms of the form $H^j u_0$
($j \ge 1$).

The plan of the paper is as follows. In section \ref{sec.2} we review first the Fourier collocation approach
carrying out the spatial discretization of the Schr\"odinger equation, and
then we turn our attention to the time discretization errors of
symplectic splitting methods. 
The bulk of the paper is contained in section \ref{sec.3}. There we carry out a theoretical analysis
of symplectic splitting methods and obtain some estimates on the time discretization error. These
estimates in turn allows us to build different classes of splitting schemes in section \ref{sec.4}, which are then illustrated 
in section  \ref{sec.5} on several
numerical examples exhibiting different degrees of regularity. Here we also include, for comparison,
results achieved by the Lanczos and Chebyshev methods.  Finally, section \ref{sec.6} contains some conclusions.

\section{Space and time discretization}
\label{sec.2}

Among many possible ways to discretize the Schr\"odinger equation in space, collocation spectral methods
possess several attractive features: they allow a relatively small grid size for representing the wave
function, are simple to implement and provide an extremely high order of accuracy if the solution of the
problem is sufficiently smooth \cite{fornberg98apg,gottlieb77nao}. In fact, spectral methods are superior to local methods (such as finite difference schemes) not only when very high spatial resolution is required, but also when long time integration
is carried out, since the resulting spatial discretization does not cause a deterioration of the phase error
as the integration in time goes on \cite{hesthaven07smf}.

To simplify the treatment, we will limit ourselves to the one-dimensional case  and
assume that the wave function is negligible outside an interval $[\alpha, \beta]$. In such a situation
one may reformulate the problem on the finite interval with periodic boundary conditions. After rescaling, one may assume without loss of generality that the space interval is $[0, 2 \pi]$, and therefore
\begin{equation}  \label{Schr1}
 i \frac{\partial}{\partial t} \psi (x,t) =
     -\frac{1}{2\mu} \frac{\partial^2 \psi}{\partial x^2}(x,t) +
     V(x)  \psi (x,t), \qquad 0 \le x < 2 \pi
\end{equation}
with $\psi(0,t) = \psi(2\pi,t)$ for all $t$. In the Fourier-collocation (or pseudospectral) approach, one
intends to construct approximations based on the equidistant interpolation grid
\[
    x_j = \frac{2 \pi}{N} j, \qquad j= 0, \ldots, N-1
\]
where $N$ is even (although the formalism can also be adapted to an odd number of points). Then
one seeks  a solution of the form \cite{lubich08fqt}
\begin{equation}   \label{pseudo1}
   \psi_N(x,t) = \sum_{|n| \le N/2} c_n(t) \e^{i n x}, \qquad x \in [0, 2 \pi)
\end{equation}
where the coefficients $c_n(t)$ are related to the grid values $\psi_N(x_j,t)$
through a discrete
Fourier transform of length $N$, $\mathcal{F}_N$ \cite{trefethen00smi}. Its
computation  can be accomplished by the \emph{Fast Fourier Transform} (FFT) algorithm
with $\mathcal{O}(N \log N)$ floating point operations.

In the collocation approach, the grid values $\psi_N(x_j,t)$ are determined by
requiring that the approximation (\ref{pseudo1}) satisfies the Schr\"odinger equation precisely
at the grid points $x_j$ \cite{lubich08fqt}.
This yields a system of $N$ ordinary differential equations to determine the $N$ point values
$\psi_N(x_j,t)$:
\begin{equation}   \label{pseudo8}
   i \frac{du}{dt} = \mathcal{F}_N^{-1} D_N \mathcal{F}_N \, u + V_N u \equiv H u,  \qquad
    u = (u_0, u_1, \ldots, u_{N-1}),
\end{equation}
where
\begin{equation}  \label{pseudo8b}
    D_N = \frac{1}{2\mu} \mbox{diag}(n^2), \qquad V_N = \mbox{diag}( V(x_j))
\end{equation}
for $n = -N/2, \ldots, N/2 - 1$ and $j = 0, \ldots, N-1$. Observe that the matrices on the right-hand side
of (\ref{pseudo8}) are Hermitian.

An important qualitative feature of this space discretization procedure is that it replaces the original Hilbert
space $\mathcal{L}^2(0,2\pi)$ defined by the quantum mechanical problem by a discrete one in which the action of operators are
approximated by $N \times N$ (Hermitian) matrices obeying the same quantum mechanical commutation
relations \cite{kosloff83afm}. From a quantitative point of view, if the function $\psi$ is sufficiently smooth and periodic, then the
coefficients $c_n$ exhibit a rapid decay (in some cases, faster than algebraically in $n^{-1}$,
uniformly in $N$), so that  typically the value of $N$ in the expansion (\ref{pseudo1}) needs not to be
very large to represent accurately the solution. Specifically, in \cite{lubich08fqt} the following result is
proved.
\begin{theorem}
\label{th:lubich}
Suppose that the exact solution $\psi(x,t)$ of (\ref{Schr1}) is such that, for some $s \ge 1$,
$\partial_x^{s+2} \psi(\cdot,t) \in \mathcal{L}^2(0,2\pi)$ for every $t \ge 0$. Then the error due to the approximation $\psi_N(x,t)$ defined by (\ref{pseudo1}) in the collocation approach is bounded by
\[
   \|\psi_N(\cdot,t) - \psi(\cdot,t)\| \le C \, N^{-s} (1+t) \max_{0 \le t' \le t} \, \left\|\partial_x^{s+2} \psi(\cdot,t') \right\|,
\]
where $C$ depends only on $s$.
\end{theorem}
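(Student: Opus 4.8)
The plan is to compare the collocation solution $\psi_N$ not with $\psi$ directly but with the trigonometric interpolant $I_N\psi$ of the exact solution, and to split the error as
\[
\psi_N(\cdot,t)-\psi(\cdot,t) = \bigl(\psi_N(\cdot,t)-I_N\psi(\cdot,t)\bigr) + \bigl(I_N\psi(\cdot,t)-\psi(\cdot,t)\bigr).
\]
The second term is a purely approximation-theoretic quantity: since $\partial_x^{s+2}\psi(\cdot,t)\in\mathcal{L}^2(0,2\pi)$, the Fourier coefficients of $\psi$ decay fast enough that the standard trigonometric interpolation estimate $\|I_N f-f\|\le C N^{-m}\|f\|_{H^m}$ (valid for $m>1/2$, here $m=s+2$) gives a bound of order $N^{-(s+2)}$. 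This is already subsumed in the claimed $N^{-s}(1+t)$ and accounts for the ``$1$'' in $(1+t)$. So the real work is to control the first term, the difference between the numerical trajectory and the interpolated exact trajectory, which is a matter of consistency plus stability.

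For consistency I would insert $I_N\psi$ into the semidiscrete scheme $i\,\partial_t v = \hat T v + I_N(Vv)$ (with $\hat T=-\tfrac{1}{2\mu}\partial_x^2$ acting exactly as a Fourier multiplier on $\mathcal{T}_N$, and pointwise multiplication by $V$ playing the role of $V_N$ in (\ref{pseudo8})) and measure the defect $d_N := i\,\partial_t (I_N\psi) - \hat T(I_N\psi) - I_N(V\,I_N\psi)$. Using that interpolation commutes with $\partial_t$ and that $\psi$ solves (\ref{Schr1}), this reduces to
\[
d_N = \bigl[I_N\hat T - \hat T I_N\bigr]\psi + I_N\bigl(V(\psi - I_N\psi)\bigr).
\]
The second piece is harmless: it is bounded by $\|V\|_\infty$ times the interpolation error of $\psi$, hence again of order $N^{-(s+2)}$. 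The first piece, the commutator of the exact kinetic operator with interpolation, is the crucial one. Bounding it by $\|I_N(\hat T\psi)-\hat T\psi\| + \tfrac{1}{2\mu}\|\partial_x^2(\psi-I_N\psi)\|$, the first summand is the interpolation error of $\hat T\psi\in H^{s}$ and the second is the interpolation error of $\psi$ measured in the $H^2$-norm; both scale like $N^{-s}\|\partial_x^{s+2}\psi\|$. This is precisely where the loss of two derivatives, and hence the rate $N^{-s}$ together with the $H^{s+2}$ regularity requirement, originates.

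For stability I would exploit that the matrix $H$ in (\ref{pseudo8}) is Hermitian, so the discrete propagator $\e^{-itH}$ is unitary; since the discrete inner product on the grid coincides with the $\mathcal{L}^2$ inner product on band-limited functions (exact trapezoidal quadrature for trigonometric polynomials), this is unitarity in $\mathcal{L}^2$. Writing $g=\psi_N-I_N\psi$, the error equation $i\,\partial_t g = Hg - d_N$ and Duhamel's formula then give $\|g(t)\| \le \|g(0)\| + \int_0^t\|d_N(t')\|\,dt'$, with $g(0)=0$ for the natural initialization $\psi_N(\cdot,0)=I_N\psi_0$. Combining the $O(tN^{-s})$ bound on this integral with the $O(N^{-(s+2)})$ interpolation term, and replacing the time-dependent seminorms by their maximum over $[0,t]$, yields the stated estimate. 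The main obstacle is the consistency step: establishing the commutator and aliasing estimates with the sharp derivative count in the relevant Sobolev norms (in particular the $H^2$-interpolation bound $\|\psi-I_N\psi\|_{H^2}\le CN^{-s}\|\partial_x^{s+2}\psi\|$), whereas the stability half is essentially free once unitarity in $\mathcal{L}^2$ is noted.
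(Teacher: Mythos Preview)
The paper does not actually prove this theorem: it is quoted verbatim from Lubich's monograph \cite{lubich08fqt} and stated without proof, so there is no in-paper argument to compare against. Your outline is precisely the standard argument (and is essentially the one given in \cite{lubich08fqt}): split the error through the trigonometric interpolant $I_N\psi$, bound the static interpolation part by the classical estimate $\|I_Nf-f\|_{H^j}\le C N^{j-m}\|f\|_{H^m}$, compute the defect of $I_N\psi$ in the semidiscrete scheme, identify the commutator $[I_N,\hat T]\psi$ as the source of the two-derivative loss, and close with unitarity of the discrete propagator via Duhamel. The derivative accounting you give (interpolation of $\hat T\psi\in H^s$ at rate $N^{-s}$, and $H^2$-interpolation of $\psi\in H^{s+2}$ also at rate $N^{-s}$) is correct, and the identification of the discrete and continuous $L^2$ norms on $\mathcal{T}_N$ is the right way to transfer unitarity. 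In short, your proposal is sound and matches the cited source; there is nothing further in the present paper to contrast it with.
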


When the problem is not periodic,  the use of a truncated Fourier series introduces 
errors in the computation. In that case several techniques have been proposed
to minimize its effects (see \cite{balakrishnan97tdq,boyd01caf} and references therein).

The previous treatment can be generalized to several spatial dimensions, still exploiting all the
one-dimensional features, by taking tensor products of one-dimensional expansions. 
The resulting functions are then defined on the Cartesian product of intervals \cite{canuto06smf,lubich08fqt}.

We can then conclude that after the previous space discretization has been applied to eq. (\ref{Schr1}),
one ends up with a linear system of ODEs of the form
\begin{equation} \label{td.1}
  i \frac{d }{dt} u(t) = H u(t), \qquad u(0)=u_0 \in \mathbb{C}^N,
 \end{equation}
where $H$ is a real symmetric matrix. This is the starting point for carrying out an integration in time. Although
a collocation approach has been applied here, in fact
any space discretization scheme leading to an equation of the form (\ref{td.1}) fits in our
subsequent analysis.

  The spatial
 discretization chosen has of course a direct consequence on the time propagation of the (discrete)
 wave function $u(t)$, since the matrix $H$ representing the Hamiltonian
 has a discrete spectrum which depends on the scheme. This discrete representation, in addition,
 restricts the energy range of the problem and therefore imposes an upper bound to the high frequency
 components represented in the propagation \cite{leforestier91aco}.

 The exact solution of eq. (\ref{td.1}) is given by
 \begin{equation}   \label{td.2}
    u(t) = \e^{-i t H} \, u_0,
\end{equation}
but to compute the exponential of the $N \times N$ complex and full matrix $-i t H$ (typically also of large
norm) by diagonalizing the matrix $H$ can be prohibitively expensive for large values of $N$. In practice, thus, one turns to
time stepping methods advancing the approximate solution from time $t_n$ to $t_{n+1} = t_n + \Delta t$,
so that the aim is to construct an approximation 
$u_{n+1} \approx u(t_{n+1}) = \e^{-i \Delta t \, H} u(t_n)$ as a map
$u_{n+1} = \phi_{\Delta t} u_n$. 

Among them, exponential splitting schemes have been widely used when the 
Hamiltonian has the form given by 
(\ref{Ham1}) \cite{feit82sot,leforestier91aco,lubich08fqt}. 
In that case, equation (\ref{td.1}) reads
\begin{equation}   \label{td.3}
   i \, \dot{u} = (T + V) u, \qquad u(0) = u_0,
\end{equation}
where $V$ is a diagonal matrix associated with $\hat{V}$ and $T$ is
related to the kinetic energy $\hat{T}$. It turns out that  the solutions $\e^{-i t T} u_0$  and $\e^{-i t V} u_0$
of equations
    $i \dot{u} = T u$  and   $i \dot{u} = V u$, respectively, can be easily determined
  \cite{lubich08fqt}, so that one may
 consider compositions of the form 
 \begin{equation}   \label{td.9}
  \e^{-i b_m \tau V} \, \e^{-i a_m \tau T} \cdots \e^{-i b_1 \tau V} \, \e^{-i a_1 \tau T},
\end{equation}
where $\tau \equiv \Delta t$. In (\ref{td.9}) the number of exponentials $m$ (and therefore the number of
coefficients $\{a_i,b_i\}_{i=1}^m$) has to be sufficiently large
to solve all the equations required to achieve order $r$ (the so called order conditions). 

Splitting methods of this class have several structure-preserving properties. They are
unitary, so that the norm of $u$ is
preserved along the integration, and 
time-reversible when the composition (\ref{td.9}) is symmetric. Error estimates of such methods applied to the Schrodinger equation \cite{jahnke00ebf,neuhauser09otc,thalhammer08hoe}  seem to suggest that, while they are indeed very efficient for high spatial regularity, they may not be very appropriate under conditions
of limited regularity.

Here we will concentrate on another class of splitting methods that have been considered in the literature~\cite{gray96sit,mclachlan97osp,zhu96nmw,blanes06sso,blanes08otl}.
Notice that the corresponding $H$ in eq. (\ref{pseudo8})  is a real symmetric matrix, and thus
$\e^{-it H}$ is not only unitary, but also symplectic with canonical coordinates
$q = \mbox{Re}(u)$ and momenta $p = \mbox{Im}(u)$. In consequence,
equation (\ref{td.1}) is equivalent to \cite{gray96sit,gray94chs}
\begin{equation}  \label{td.10}
  \dot{q} = H p, \qquad \dot{p} = - H q.
\end{equation}
Alternatively, one may write
\begin{equation}  \label{td.11}
 \frac{d}{dt} \left(
 \begin{array}{c}
  q \\
  p  \end{array} \right) =  \left(
 \begin{array}{ccc}
  0   & \, & H  \\
  -H & \, &  0  \end{array} \right)   \left(
 \begin{array}{c}
   q \\
   p  \end{array} \right) \equiv ( A+ B) \left(
 \begin{array}{c}
  q \\
  p  \end{array} \right),
\end{equation}
with the $2N \times 2N$ matrices $A$ and $B$
given by
\[
  A = \left(  \begin{array}{ccc}
               0   & \, &  H  \\
               0    & \, &  0  \end{array} \right),
              \qquad\qquad
  B = \left(  \begin{array}{ccc}
               0   & \, & 0  \\
              -H    & \, & 0  \end{array} \right).
\]
The solution operator corresponding to (\ref{td.11}) can be written in terms of the rotation matrix
\begin{equation}  \label{O(y)}
   O(y) = \left(
 \begin{array}{rcr}
   \cos(y)   &  & \sin(y)  \\
  -\sin(y)  &   & \cos(y)  \end{array}
  \right)
\end{equation}
as $O(t \, H)$, which is an orthogonal and symplectic $2N \times 2N$ matrix. 
Computing $O(t\, H)$ exactly (by diagonalizing the matrix $H$) is, as mentioned before 
for its complex representation $\e^{-i\, t \, H}$,
computationally very expensive, so that one typically  splits
the whole time interval into subintervals of length $\tau \equiv \Delta t$ and then
approximate $O(\tau H)$ acting on the initial condition at each step. Since
\[
  \e^{\tau a_k A} = \left(
 \begin{array}{ccc}
    I   & \,  & a_k \tau H  \\
    0  & \,  &  I   \end{array} \right), \qquad \qquad
   \e^{\tau b_k B} = \left(
 \begin{array}{ccc}
   I   & \, & 0 \\
  -b_k \tau H  &  \, & I  \end{array} \right)
\]
it makes sense to apply splitting methods of the form 
\begin{equation} \label{compos}
%  e^{\tau ({\bf A+B})}
% O_r(\tau) =
% \prod_{i=1}^k  e^{h a_i{\bf A}} \, e^{h b_i{\bf B}}
  u_{n+1} = \e^{\tau b_{m} B} \, \e^{\tau a_m A} \ \cdots \
  \e^{\tau b_1 B} \, \e^{\tau a_1 A } \, u_n.
\end{equation}
Observe that the evaluation of the exponentials of $A$ and $B$ requires only  computing the products $H p$ and
$H q$, and this can be done very efficiently with the FFT algorithm.

Several methods with different orders have been constructed along
these lines \cite{gray96sit,liu05oos,zhu96nmw}. In particular,  the schemes presented in \cite{gray96sit}
use only $m=r$
exponentials $\e^{\tau a_{i} A}$ and $\e^{\tau b_{i} B}$ to achieve order
$r$ for $r=4,6,8,10$ and $12$. Furthermore, when the idea of processing is taken into account,
it is possible to design families of symplectic splitting methods with large stability intervals and
a high degree of accuracy \cite{blanes06sso,blanes08otl}. They have the general structure
$P(\tau H) K(\tau H) P^{-1}(\tau H)$,
where $K$ (the kernel) is built as a composition (\ref{compos}) and $P$ (the processor) is taken
as a polynomial. 

Although these methods are neither unitary nor unconditionally stable, they are symplectic and
conjugate to unitary schemes. In consequence, neither the average error in energy
nor the norm of the solution increase with time. In other words, 
the quantities $\|u\|^2 = u^T \bar{u}/N$ and $u^T H \bar{u}/(2N)$ are both approximately preserved
along the evolution, since the committed error is (as shown in Subsection~\ref{ssec.3.1} below) only local and does not propagate with time. 
The mechanism that takes place here is analogous to the propagation of
the error in energy for symplectic integrators in classical
mechanics \cite{hairer06gni}. In addition, the families of splitting methods considered here are designed
to have large stability intervals and can be applied when no particular structure is required for
the Hamiltonian matrix $H$. Furthermore, they can also be used in more general problems of the
form $\dot{q} = M_1 p$, $\dot{p} = - M_2 q$, resulting, in particular, from the space discretization of Maxwell
equations \cite{blanes08otl}.

\section{Analysis of symplectic splitting methods for time discretization}
\label{sec.3}

In this section we proceed to characterize the family of splitting symplectic methods (\ref{compos}),
paying special attention to their stability properties. By interpreting the numerical solution as the exact
solution corresponding to a modified differential equation, it is possible to prove that the norm and
energy of the original system are approximately preserved along evolution. We also
provide rigorous estimates of the time discretization error that are uniformly valid as both the space
and time discretizations get finer and finer. The analysis allows us to construct new methods with large stability
domains such that the error introduced is comparable to the error coming from the space discretization.

\subsection{Theoretical analysis}
\label{ssec.3.1}

It is clear  that 
the problem of finding appropriate compositions  of the form (\ref{compos}) for equation
(\ref{td.11}) is equivalent to getting coefficients
$a_{i}$, $b_{i}$ in the matrix 
\begin{equation}
  K(\tau H)  =   %\prod_{i=1}^k
   \left(
 \begin{array}{cr}
  I & 0 \\
   -b_m \tau H & \  I  \end{array} \right)
  \left(
 \begin{array}{cc}
  I & a_m \tau H \\
  0  & \ \ I  \end{array} \right)
    \ \cdots \
   \left(
 \begin{array}{cr}
  I & 0 \\
   -b_1 \tau H & \  I  \end{array} \right)
  \left(
  \begin{array}{cc}
  I & a_1 \tau H \\
   0 & \ \ I  \end{array} \right)
  \label{prod-gen2p2}
\end{equation}
such that $K(\tau H)$
approximates the solution $O(\tau H)$, where $O(y)$ denotes the rotation matrix (\ref{O(y)}).
The matrix $K(\tau H)$ that propagates the numerical solution of the splitting method (\ref{prod-gen2p2})  can be written as
\begin{equation}\label{K(x)}
  K(\tau H) =  \left(
 \begin{array}{cc}
  K_{1}(\tau H) &  K_{2}(\tau H) \\
  K_{3}(\tau H) &  K_{4}(\tau H)
 \end{array} \right),
\end{equation}
where the entries $K_1(y)$ and $K_4(y)$  (respectively, $K_2(y)$ and $K_3(y)$) are even 
(repect., odd) polynomials in $y \in \mathbb{R}$, and $\det K(y) = K_1(y) K_4(y) - K_2(y) K_3(y) \equiv 1$. 
It is worth stressing here that by diagonalizing the matrix $H$ with an appropriate linear change of variables,
one may transform the system into $N$ uncoupled harmonic oscillators
with frequencies $\omega_1,\ldots,\omega_{N}$. Although in practice one wants to avoid diagonalizing $H$, 
numerically solving system (\ref{td.10}) by a splitting method is mathematically equivalent to 
applying the splitting method to each of such one-dimensional harmonic oscillators (and then 
rewriting the result in the original variables). Clearly, the numerical solution of each 
individual harmonic oscillator is propagated by 
the $2\times 2$ matrix $K(y)$ with polynomial entries $K_j(y)$ ($j=1,2,3,4$) for $y=\tau \omega_j$. 
We will refer to $K(y)$ in the sequel as the propagation matrix, although other denominations have 
also been used 
\cite{blanes08otl,mclachlan97osp}.

Moreover, for a given $K(y)$ with polynomial entries, an algorithm has been proposed to  
factorize $K(y)$ as (\ref{prod-gen2p2}) and determine uniquely the coefficients $a_i$, $b_i$ of the
splitting method \cite[Proposition 2.3]{blanes08otl}. Thus, any splitting method  is uniquely determined by its propagation matrix $K(y)$. For this reason, in the analysis that follows we will be only 
concerned with such matrices $K(y)$.

 When applying splitting methods to the system (\ref{td.10}) with time step size $\tau$, 
 the numerical solution is propagated by $(K(\tau H))^n$ as an approximation to 
 $O(\tau H)^n=O(n\tau H)$, which is bounded (with $L^2$ norm equal to 1) independently of $n$. 
 It then makes sense requiring that
$(K(\tau H))^n$ be also bounded independently of $n \ge 1$. This clearly holds if for 
each eigenvalue $\omega_j$ of $H$, the corresponding $2 \times 2$ matrix $K(y)$  
with $y = \tau \omega_j$ is stable, i.e., if $\|(K(\tau \omega_j))^n\| \le C$ for some constant $C > 0$. 
%Here $C>0$ is an appropriate constant valid for all $j=1,\ldots,N$.

In our analysis, use will be made of the stability polynomial, defined for a given $K(y)$ by
\begin{equation}    \label{stab-pol}
   p(y) = \frac{1}{2} \tr \, K(y) = \frac{1}{2} ( K_1(y) + K_4(y)).
\end{equation}
The following proposition, whose proof can be found in \cite{blanes08otl}, provides a characterization of the
stability of $K(y)$.

\begin{proposition}
\label{p:1} Let $K(y)$ be a $2 \times 2$  matrix  with $\det K(y)
= 1$,  and $p(y)=\frac{1}{2} \tr\, K(y)$, with $y \in \mathbb{R}$.  
Then, the following statements are
equivalent:
\begin{itemize}
  \item[(a)] The matrix $K(y)$ is stable.
  \item[(b)]  The matrix $K(y)$ is diagonalizable with eigenvalues of modulus one.
  \item[(c)]  $|p(y)|\leq 1$, and there exists a real matrix $Q(y)$ such that 
\begin{eqnarray} \label{Ophi(x)}
 Q(y)^{-1} K(y) Q(y) = O(\phi(y)), 
\end{eqnarray}
where $O(y)$ is the rotation matrix (\ref{O(y)}) and 
$\phi(y) = \mathrm{arccos} \, p(y) \in \mathbb{R}$.
\end{itemize}
\end{proposition}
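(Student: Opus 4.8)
The plan is to establish that (a), (b), (c) are equivalent by exploiting the fact that a $2 \times 2$ matrix with $\det K(y) = 1$ has characteristic polynomial $\lambda^2 - 2 p(y) \lambda + 1$, so its eigenvalues $\lambda_\pm = p(y) \pm \sqrt{p(y)^2 - 1}$ satisfy $\lambda_+ \lambda_- = 1$ and $\lambda_+ + \lambda_- = 2 p(y)$. Since $y$ is fixed throughout, I suppress it and write $K$, $p$, $\phi$. The entire argument then reduces to a case analysis governed by the size of $|p|$, and I would organize it as a cycle of implications.

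First I would prove (b)$\Rightarrow$(a): if $K$ is diagonalizable with eigenvalues of modulus one, write $K = S D S^{-1}$ with $D$ diagonal and $\|D^n\| = 1$ for every $n$, whence $\|K^n\| \le \|S\| \, \|S^{-1}\|$ uniformly in $n$, so $K$ is stable. Next, (a)$\Rightarrow$(b) by contraposition: if (b) fails, then either $|p| > 1$, in which case $\lambda_\pm$ are real with $|\lambda_+| > 1 > |\lambda_-|$ and $\|K^n\|$ grows exponentially; or $|p| = 1$ with $K$ not diagonalizable, so that $K$ is similar to a Jordan block with repeated eigenvalue $\pm 1$ and $\|K^n\|$ grows linearly in $n$. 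In both situations $K$ is unstable, which closes the equivalence (a)$\Leftrightarrow$(b).

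It then remains to show (b)$\Leftrightarrow$(c). The implication (c)$\Rightarrow$(b) is immediate, since the rotation matrix $O(\phi)$ is diagonalizable over $\mathbb{C}$ with eigenvalues $\e^{\pm i \phi}$ of modulus one, a property preserved under the real conjugation by $Q$. For (b)$\Rightarrow$(c), note first that $2p = \lambda_+ + \lambda_-$ with $|\lambda_\pm| = 1$ and $\lambda_+ \lambda_- = 1$ forces $\lambda_\pm = \e^{\pm i \phi}$ with $\phi = \arccos p \in \mathbb{R}$, and in particular $|p| \le 1$. When $|p| < 1$ the eigenvalues are genuinely complex, and I would take a complex eigenvector $v = v_1 + i v_2$ of $K$ for $\e^{i\phi}$ and split $K v = \e^{i\phi} v$ into real and imaginary parts, obtaining $K v_1 = \cos\phi \, v_1 - \sin\phi \, v_2$ and $K v_2 = \sin\phi \, v_1 + \cos\phi \, v_2$. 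Setting $Q = [\, v_1 \mid v_2 \,]$, which is real and invertible because $v_1$ and $v_2$ are linearly independent (a non-real eigenvalue cannot possess a real eigenvector), yields exactly $Q^{-1} K Q = O(\phi)$. The degenerate case $|p| = 1$ is handled separately: diagonalizability then forces $K = \pm I$, equal to $O(0)$ or $O(\pi)$, so any invertible real $Q$ serves.

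The main obstacle is precisely this real-conjugation step in (b)$\Rightarrow$(c): one must exhibit a genuinely real matrix $Q$, not merely a complex diagonalizing transformation, and one must match the branch of the square root so that the rotation angle is exactly $\phi = \arccos p$ rather than its negative. The real/imaginary-part construction settles both points simultaneously, provided one consistently selects the eigenvalue $\e^{i\phi}$ with $\phi \in [0,\pi]$; the resulting sign of $\sin\phi$ then fixes the orientation of the rotation in agreement with the definition of $O(\phi)$.
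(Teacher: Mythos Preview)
Your argument is correct and self-contained. The cycle (b)$\Rightarrow$(a)$\Rightarrow$(b) via contraposition is standard, and your real/imaginary-part construction of $Q$ in (b)$\Rightarrow$(c) is the right way to produce a \emph{real} conjugating matrix; the check that $K[v_1\mid v_2] = [v_1\mid v_2]\,O(\phi)$ with the paper's orientation of $O(\phi)$ goes through exactly as you wrote. The degenerate case $|p|=1$ is also handled correctly.

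As for comparison: the paper does not actually prove this proposition. It simply states it and refers the reader to \cite{blanes08otl} for the proof. So your write-up is not duplicating anything in the present paper; if anything, it supplies what the authors chose to omit. One minor remark: the paper's subsequent Proposition~\ref{p:2} constructs an explicit $Q(y)$ of lower-triangular form (\ref{Q(y)}) in terms of the entries of $K(y)$, which is a more concrete realization than your eigenvector-based $Q$; but for the purposes of the present equivalence statement your construction is entirely adequate.
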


We define the stability threshold $y_*$ as the largest non negative real number such that $K(y)$
is stable for all $y \in (-y_*,y_*)$. Thus, $(K(\tau H))^n$ will be bounded independently of $n \ge 1$ 
if all the eigenvalues of $\tau H$ lie on the stability interval $(-y_*,y_*)$, that is, if $\tau \, \rho(H)<y_*$, 
where $\rho(H)$ is the spectral radius of the matrix $H$. For instance, if a Fourier-collocation approach 
based on $N$ nodes is applied to discretize (\ref{Schr1})
in space, the spectral radius is of size $\rho(H) = \mathcal{O}(N^2)$ 
(cf. eqs. (\ref{pseudo8})-(\ref{pseudo8b})), 
which shows that $\tau$ must decrease proportionally to $N^{-2}$ as the number of 
nodes $N$ increases.

The stability threshold
$y_*$ depends on the coefficients $\{a_i, b_i\}$ of the method (\ref{compos}) and verifies $y_* \le 2m$, since
$2m$ is the optimal value for the stability  threshold achieved by the concatenation of $m$
steps of length $\tau/m$ of the leapfrog scheme \cite{chawla81iop}.

The stability of the matrix $K(y)$ of a splitting method for a given $y \in \mathbb{R}$ can 
alternatively be characterized as follows.
\begin{proposition}
\label{p:2} The matrix $K(y)$ is stable for a given $y \in \mathbb{R}$ if and only if there exist 
real quantities $\phi(y),\epsilon(y),\gamma(y)$, with $\gamma(y) \ne 0$, such that
\begin{eqnarray}  \label{alt-stab}
     K(y)=
\left(
 \begin{array}{cc}
    \cos (\phi(y)) + \epsilon(y) \sin( \phi(y)) &   \gamma(y) \sin( \phi(y))  \\
\displaystyle -\frac{1+\epsilon(y)^2}{\gamma(y)} \sin ( \phi(y)) &  \cos( \phi(y)) - \epsilon(y) \sin(\phi(y))
 \end{array} \right).
\end{eqnarray}
\end{proposition}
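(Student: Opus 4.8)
$K(y)$ is stable iff it can be written in the form (\ref{alt-stab}) with $\gamma(y) \neq 0$.

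Key observations:
- $K(y)$ has det = 1
- $p(y) = \frac{1}{2}\text{tr}\,K(y) = \frac{1}{2}(K_1 + K_4)$
- Proposition p:1 gives equivalences: stable ⟺ diagonalizable with eigenvalues of modulus 1 ⟺ $|p(y)| \leq 1$ and $K = Q O(\phi) Q^{-1}$

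Let me verify the form (\ref{alt-stab}). Set $c = \cos\phi$, $s = \sin\phi$. The matrix is:
$$K = \begin{pmatrix} c + \epsilon s & \gamma s \\ -\frac{1+\epsilon^2}{\gamma} s & c - \epsilon s \end{pmatrix}$$

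Check trace: $(c + \epsilon s) + (c - \epsilon s) = 2c = 2\cos\phi$. So $p(y) = \cos\phi$.

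Check determinant:
$(c + \epsilon s)(c - \epsilon s) - (\gamma s)(-\frac{1+\epsilon^2}{\gamma} s)$
$= c^2 - \epsilon^2 s^2 + (1+\epsilon^2) s^2$
$= c^2 - \epsilon^2 s^2 + s^2 + \epsilon^2 s^2$
$= c^2 + s^2 = 1$ ✓

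So any matrix of form (\ref{alt-stab}) has det = 1 and trace $= 2\cos\phi$, so $|p| = |\cos\phi| \leq 1$.

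**Forward direction (stable ⟹ form):** If $K$ is stable, by Proposition p:1, $|p(y)| \leq 1$ and there's a similarity to $O(\phi)$. I need to show $K$ has the specific form.

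Given det $K = 1$ and $K = \begin{pmatrix} K_1 & K_2 \\ K_3 & K_4 \end{pmatrix}$ with $K_1 + K_4 = 2p = 2\cos\phi$.

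Set $\epsilon$ from $K_1 = \cos\phi + \epsilon\sin\phi$, i.e., $\epsilon \sin\phi = K_1 - \cos\phi = K_1 - p$. Then $K_4 = 2p - K_1 = p - (K_1 - p) = \cos\phi - \epsilon\sin\phi$ automatically.

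Set $\gamma$ from $K_2 = \gamma\sin\phi$.

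Need to verify $K_3 = -\frac{1+\epsilon^2}{\gamma}\sin\phi$. Using det = 1:
$K_1 K_4 - K_2 K_3 = 1$
$(\cos\phi + \epsilon\sin\phi)(\cos\phi - \epsilon\sin\phi) - K_2 K_3 = 1$
$\cos^2\phi - \epsilon^2\sin^2\phi - K_2 K_3 = 1$
$K_2 K_3 = \cos^2\phi - \epsilon^2\sin^2\phi - 1 = -\sin^2\phi - \epsilon^2\sin^2\phi = -(1+\epsilon^2)\sin^2\phi$
So $K_3 = \frac{-(1+\epsilon^2)\sin^2\phi}{K_2} = \frac{-(1+\epsilon^2)\sin^2\phi}{\gamma\sin\phi} = -\frac{(1+\epsilon^2)\sin\phi}{\gamma}$ ✓

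The subtle case: when $\sin\phi = 0$ (i.e., $p = \pm 1$), then $K$ is stable only if $K = \pm I$ (diagonalizable with both eigenvalues $\pm 1$). In that case $\phi = 0$ or $\pi$, and we can pick $\gamma$ freely nonzero; the representation works with $\epsilon$ arbitrary. When $\sin\phi \neq 0$, need $\gamma = K_2/\sin\phi$ to be well-defined and nonzero.

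Issue: if $K_2 = 0$ but $\sin\phi \neq 0$? Then $\gamma = 0$, violating $\gamma \neq 0$. But we have freedom in choosing $\phi$ (can replace $\phi$ by $-\phi$). Actually the sign of $\phi$ can be chosen so that $K_2/\sin\phi \neq 0$ unless $K_2 = 0$. If $K_2 = 0$ and $K$ is stable with $|p| < 1$... Let me think. If $K_2 = 0$, det $K = K_1 K_4 = 1$, and stable means eigenvalues on unit circle. Eigenvalues of upper/lower triangular are $K_1, K_4$. For stability need $|K_1| = |K_4| = 1$ with $K_1 K_4 = 1$, forcing $K_1 = K_4 = \pm 1$ — but then $p = \pm 1$, contradicting $|p| < 1$. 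So if $K$ stable and $K_2 = 0$, must have $p = \pm 1$, the boundary case. Good — so whenever $|p| < 1$, $K_2 \neq 0$ and $\gamma$ is well-defined nonzero.

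**Proof Plan:**

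\textbf{Reverse direction.} Suppose $K(y)$ has the form (\ref{alt-stab}) with $\gamma(y) \neq 0$. A direct computation of the trace and determinant of the matrix on the right-hand side of (\ref{alt-stab}) gives $\text{tr}\, K(y) = 2\cos(\phi(y))$ and $\det K(y) = \cos^2(\phi(y)) + \sin^2(\phi(y)) = 1$, consistently with the general form (\ref{K(x)}). In particular $p(y) = \frac{1}{2}\text{tr}\, K(y) = \cos(\phi(y))$, so $|p(y)| \le 1$. To invoke part (c) of Proposition \ref{p:1} I must exhibit a real invertible matrix $Q(y)$ conjugating $K(y)$ to $O(\phi(y))$. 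I would take
\begin{equation}  \label{Qmat}
  Q(y) = \left( \begin{array}{cc} \gamma(y) & 0 \\ -\epsilon(y) & 1 \end{array} \right),
\end{equation}
whose determinant is $\gamma(y) \neq 0$, and check by a short matrix multiplication that $Q(y)^{-1} K(y) Q(y)$ equals the rotation matrix $O(\phi(y))$ of (\ref{O(y)}). Stability of $K(y)$ then follows from the equivalence (a)$\Leftrightarrow$(c) in Proposition \ref{p:1}.

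\textbf{Forward direction.} Conversely, suppose $K(y)$ is stable. By Proposition \ref{p:1} we have $|p(y)| \le 1$, so we may set $\phi(y) = \arccos\, p(y)$, giving $\cos(\phi(y)) = p(y)$. Writing the entries of $K(y)$ as in (\ref{K(x)}) and recalling $K_1(y) + K_4(y) = 2 p(y)$ and $K_1(y) K_4(y) - K_2(y) K_3(y) = 1$, I would define $\epsilon(y)$ and $\gamma(y)$ by matching entries, namely through $\epsilon(y) \sin(\phi(y)) = K_1(y) - p(y)$ and $\gamma(y) \sin(\phi(y)) = K_2(y)$; the relations for $K_3(y)$ and $K_4(y)$ in (\ref{alt-stab}) then follow automatically from the trace and determinant conditions (a one-line verification using $\cos^2\phi + \sin^2\phi = 1$). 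The only obstacle is to ensure $\gamma(y) \neq 0$ and that $\epsilon(y), \gamma(y)$ are well-defined reals, which requires $\sin(\phi(y)) \neq 0$.

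\textbf{The main obstacle: the boundary case $|p(y)| = 1$.} When $|p(y)| < 1$ we have $\sin(\phi(y)) \neq 0$, and I would argue that stability forces $K_2(y) \neq 0$ — indeed if $K_2(y) = 0$ then $\det K(y) = K_1(y) K_4(y) = 1$ with $K(y)$ triangular, so its eigenvalues $K_1(y), K_4(y)$ lie on the unit circle and multiply to $1$, forcing $K_1(y) = K_4(y) = \pm 1$ and hence $|p(y)| = 1$, a contradiction. Thus $\gamma(y) = K_2(y)/\sin(\phi(y))$ is nonzero, completing this case. The delicate point is $|p(y)| = 1$, i.e. $\phi(y) \in \{0, \pi\}$ and $\sin(\phi(y)) = 0$: here a stable $K(y)$ must be diagonalizable with a repeated eigenvalue $\pm 1$ (by part (b) of Proposition \ref{p:1}), forcing $K(y) = \pm I$. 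One then checks that $\pm I$ is recovered from (\ref{alt-stab}) by taking $\phi(y) = 0$ (resp. $\pi$), $\epsilon(y) = 0$, and any nonzero $\gamma(y)$; conversely, a matrix of the form (\ref{alt-stab}) with $\sin(\phi(y)) = 0$ equals $\pm I$, which is stable. Handling this degenerate case cleanly — and confirming that the free parameters $\epsilon(y), \gamma(y)$ in the representation are consistent with $\gamma(y) \neq 0$ — is the one place where the argument needs care rather than routine computation.
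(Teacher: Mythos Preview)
Your proof is correct and follows essentially the same route as the paper's: in the reverse direction you verify the trace and exhibit a conjugating matrix $Q(y)$, and in the forward direction you split into the cases $|p(y)|=1$ (where stability forces $K(y)=\pm I$) and $|p(y)|<1$ (where you solve for $\epsilon,\gamma$ from the entries and recover $K_3$ via the determinant). The only cosmetic differences are that the paper takes $Q(y)=\left(\begin{smallmatrix}\gamma^{1/2}&0\\-\epsilon\gamma^{-1/2}&\gamma^{-1/2}\end{smallmatrix}\right)$ instead of your $\left(\begin{smallmatrix}\gamma&0\\-\epsilon&1\end{smallmatrix}\right)$ (a harmless scalar multiple; yours in fact avoids a square root), and the paper deduces $K_2\neq 0$ directly from $-K_2K_3=(1+\epsilon^2)\sin^2\phi>0$ rather than via your triangular-eigenvalue argument.
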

\begin{proof}
If $K(y)$ is of the form (\ref{alt-stab})  then, obviously, $\tr K(y) = 2 \cos(\phi(y))$ and  thus
$\cos(\phi(y)) = p(y)$. Moreover, it is straightforward to check that  (\ref{Ophi(x)}) holds with 
\begin{eqnarray}
\label{Q(y)}
  Q(y) = 
\left(
 \begin{array}{cc}
\gamma(y)^{1/2} &  0  \\
 -\epsilon(y)\, \gamma(y)^{-1/2}  &  \gamma(y)^{-1/2}
 \end{array} \right),
\end{eqnarray}
so that $K(y)$ is stable in that case. 

Let us assume now that $K(y)$ is stable, so that from the third characterization 
 given in Proposition~\ref{p:1},  $\phi(y)=\arccos(p(y)) \in \mathbb{R}$, where $p(y)$ is 
 the stability polynomial. We now consider two cases:
\begin{itemize}
\item   $p(y)=1$ (resp. $p(y)=-1$), so that $K(y)$ (resp. $-K(y)$) is similar to the identity matrix, which implies that $K(y)$  (resp. $-K(y)$) is also the identity matrix. In that case, (\ref{alt-stab}) holds with $\epsilon(y)=0$ and $\gamma(y)=1$.
\item If $p(y)^2 \neq 1$, then $\sin(\phi(y)) \neq 0$, and we set
  \begin{eqnarray*}
    \epsilon(y) = \frac{K_1(y)-K_4(y)}{2\sin(\phi(y))}, \qquad
    \gamma(y) = \frac{K_2(y)}{\sin(\phi(y))}. 
  \end{eqnarray*}
Since $\det(K(y))=1$, one has
\begin{eqnarray*}
  -K_2(y)K_3(y) = 1-K_1(y)K_4(y)
= (1+\epsilon(y)^2)\sin(\phi(y))^2,
\end{eqnarray*}
which implies that $\gamma(y)\neq 0$ and 
\begin{eqnarray*}
  K_3(y)= -\frac{1+\epsilon(y)^2}{\gamma(y)}\sin(\phi(y)).
\end{eqnarray*}
\end{itemize}
\end{proof}

Notice that, for a given splitting method with a non-empty stability interval $(-y_*,y_*)$,  Proposition~\ref{p:2} determines two odd functions $\phi(y)$ and $\epsilon(y)$ and an even function $\gamma(y)$ defined for $y \in (-y_*,y_*)$ which characterize the accuracy of the method when applied with step size $\tau$ to a harmonic oscillator of frequency $\omega$, with $y=\tau \omega$. An accurate approximation will be obtained if $|\phi(y)-y|$, $|\gamma(y)-1|$, and $|\epsilon(y)|$ are all small quantities.   
In particular,  if the splitting method is of order $r$, then
\[
   \phi(y) = y + \mathcal{O}(y^{r+1}), \qquad\quad   \epsilon(y) = \mathcal{O}(y^{r}), \qquad\quad  \gamma(y) = 1 + \mathcal{O}(y^{r})
\]      
as $y \rightarrow 0$. For instance, for the simple first order splitting 
$\e^{\tau A} \e^{\tau B}$ one has
\[
K(y) = 
 \left(
 \begin{array}{cr}
  1 & y \\
   0 &  1  \end{array} \right)
 \left(
 \begin{array}{cr}
  1 & 0 \\
   - y &  1  \end{array} \right) =
 \left(
 \begin{array}{cr}
  1-y^2 & y \\
   - y &  1  \end{array} \right),
\]
and one can easily check that 
\begin{eqnarray*}
  \phi(y)&=& \arccos(1-y^2/2) = 2 \arcsin(y/2) = y + \mathcal{O}(y^3), \\
\epsilon(y) &=& \frac{-y}{\sqrt{4-y^2}} = \mathcal{O}(y), \\
\gamma(y) &=& \frac{2}{\sqrt{4-y^2}} = 1 + \mathcal{O}(y^2).
\end{eqnarray*}

It is worth stressing that (\ref{alt-stab}) implies that (\ref{Ophi(x)}) holds with 
$Q(y)$ given by (\ref{Q(y)}). This feature, in particular, allows
us to interpret the numerical result obtained by a
splitting method of the form (\ref{compos}) applied to (\ref{td.11}) in terms of the 
exact solution corresponding to a 
modified differential equation. Specifically,  assume that 
$q_n+ i\, p_n =u_n \approx u(t_n) = \exp(-i\, t_n  H) u_0$ is obtained (for $t_n = n \tau$, $n\geq 1$) as
\[
     \left(
 \begin{array}{c}
q_n\\ p_n 
 \end{array} \right) =  K(\tau H)^n 
 \left( \begin{array}{c}
q_0\\ p_0
 \end{array} \right).
\]
If $\tau \, \rho(H) <  y_*$, 
then it holds that
\begin{eqnarray*}
   \tilde{u}_n \equiv  \big( \gamma(\tau  H)^{-1/2} + i  \, \epsilon(\tau H) \gamma(\tau  H)^{-1/2} \big) \, q_n+ i\, \gamma(\tau  H)^{1/2} \, p_n
\end{eqnarray*}
trivially verifies $\tilde{u}_n = \exp(-i n \phi(\tau H)) \tilde{u}_0$. In other words, $\tilde{u}_n$ \textit{is}
the exact solution at $t_n = n \tau$ of the initial value problem 
\begin{eqnarray}
\label{eq:mode}
i \frac{d}{dt}  \tilde{u} =  \tilde{H} \, \tilde{u}, \qquad 
 \tilde{u}(0) =  \tilde{u}_0,
\end{eqnarray}
where $\tilde H = \frac{1}{\tau} \phi(\tau H) \approx H$.  With this backward error
analysis interpretation at hand, it readily follows the preservation of both the discrete $L^2$ norm  of 
\begin{eqnarray*}
  \tilde{u} = \big( \gamma(\tau  H)^{-1/2} + i  \, \epsilon(\tau H) \gamma(\tau  H)^{-1/2} \big) \, q +
   i\, \gamma(\tau  H)^{1/2} \, p
\end{eqnarray*}
 and the energy
 corresponding to (\ref{eq:mode}). This implies that the discrete $L^2$ norm of $u=q+ i\, p$ and the energy of the original system will be approximately preserved (that is, their variation will be uniformly bounded for all times $t_n$).

\subsection{Error estimates}

Our goal now is to obtain meaningful estimates of the time dis\-cre\-ti\-za\-tion error that are uniformly valid as $N\to \infty$ and $\tau\to 0$, that is,  as both the space discretization and the time discretization get finer and finer.
We know that, by stability requirements, the time step used in the time integration by a splitting method of  system (\ref{td.10}) must be chosen as $\tau < y_*/\rho(H)$,
where the stability threshold must verify $y_* \le 2 m$  for an $m$-stage splitting method. 
Since the Hermitian matrix $H$ comes from the space discretization of an unbounded self-adjoint operator, the spectral radius $\rho(H)$ will tend to infinity as $N\to \infty$, and thus inevitably $\tau\to 0$. It 
seems then reasonable to introduce the parameter 
\begin{eqnarray}
\label{tau}
  \theta \equiv \tau \rho(H)
\end{eqnarray}
 and analyze
the time-integration error  corresponding to a fixed value of $\theta < y_*$.

The fact that, for each $y \in (-y_*,y_*)$,  (\ref{Ophi(x)}) holds with $Q(y)$ given by (\ref{Q(y)}) implies that,
 for each $n\geq 1$,
\begin{eqnarray*}
     K(y)^n=
\left(
 \begin{array}{cc}
    \cos (n\phi(y)) + \epsilon(y) \sin(n \phi(y)) &   \gamma(y) \sin(n \phi(y))  \\
\displaystyle -\frac{1+\epsilon(y)^2}{\gamma(y)} \sin (n \phi(y)) &  \cos(n \phi(y)) - \epsilon(y) \sin(n\phi(y))
 \end{array} \right).
\end{eqnarray*}
This will allow us to obtain rigorous estimates for the error of approximating  $e^{-i t H} u_0$ by applying $n$ steps of a splitting method with time-step $\tau=t/n$. Specifically, we have 
\begin{eqnarray}
\nonumber
\left(
 \begin{array}{c}
q_n \\ p_n
 \end{array} \right) &=&
  K(\tau \, H)^n
\left(
 \begin{array}{c}
q_0 \\ p_0
 \end{array} \right) \\
\label{E-error}
 &=& O(n \phi(\tau \, H)) \left(
 \begin{array}{c}
q_0 \\ p_0
 \end{array} \right)
+ E(\tau \, H) 
 \left(
 \begin{array}{c}
\sin(n \phi(\tau \, H))q_0 \\ \sin(n \phi(\tau \, H)) p_0
 \end{array} \right),
\end{eqnarray}
where $O(y)$ denotes the rotation matrix (\ref{O(y)}) and the $2\times 2$ matrix $E(y)$ is given by
\begin{equation}   \label{matrix.E}
  E(y) = 
\left(
 \begin{array}{cc}
     \epsilon(y) &   \gamma(y) -1 \\
\displaystyle -\frac{1+\epsilon(y)^2}{\gamma(y)}+1& - \epsilon(y)
 \end{array} \right),
\end{equation}
so that the following theorem can be stated.

\begin{theorem}
\label{th:error0}
  Given $u_0 = q_0+i p_0$, let $u_n=q_n+ i p_n$ be the approximation to $u(n\tau) = e^{-i\, n \tau\, H}u_0$ obtained by applying $n$ steps of length (\ref{tau})  of a splitting method with stability threshold $y_*$.
   Then  one has
  \begin{eqnarray*}
    \| u_n - u(n\tau) \| \leq (n\mu(\theta)+\nu(\theta)) \, \|u_0\|
  \end{eqnarray*}
(in the Euclidean norm), where
\begin{eqnarray*}
  \mu(\theta) = \sup_{0\leq y \leq \theta} |\phi(y)-y|, \qquad
  \nu(\theta) = \sup_{0\leq y \leq \theta} \|E(y)\|.
\end{eqnarray*}

\end{theorem}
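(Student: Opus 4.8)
The plan is to work directly from the decomposition (\ref{E-error}), which already isolates the numerical solution into a genuine rotation $O(n\phi(\tau H))$ plus a correction carried by the matrix $E(\tau H)$. Since the exact solution satisfies $(q(n\tau),p(n\tau))^T = O(n\tau H)(q_0,p_0)^T$, subtracting and applying the triangle inequality splits the total error into a \emph{phase error} $\|[O(n\phi(\tau H))-O(n\tau H)](q_0,p_0)^T\|$ and an \emph{amplitude error} $\|E(\tau H)(\sin(n\phi(\tau H))q_0,\sin(n\phi(\tau H))p_0)^T\|$. I will bound each separately, using throughout that the Euclidean norm of $u=q+ip$ coincides with the $\mathbb{R}^{2N}$ norm of the stacked vector $(q,p)$, so that $\|u_0\|=\|(q_0,p_0)\|$.

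For the phase error I would pass to the complex representation: a short computation shows that for any symmetric matrix $M$ the block operator $O(M)$ acting on $(q,p)$ corresponds exactly to multiplication by $\e^{-iM}$ on $u=q+ip$. Hence the phase error equals $\|(\e^{-in\phi(\tau H)}-\e^{-in\tau H})u_0\|$. Because $H$ is real symmetric, all these functions of $H$ share a common orthonormal eigenbasis, so the operator is normal and its norm is the largest eigenvalue modulus $\max_j|\e^{-in\phi(y_j)}-\e^{-iny_j}|$, where $y_j=\tau\omega_j$ ranges over the scaled spectrum and lies in $[-\theta,\theta]$ by the definition $\theta=\tau\rho(H)$. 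The elementary estimate $|\e^{ia}-\e^{ib}|\le|a-b|$ then gives the per-eigenvalue bound $n|\phi(y_j)-y_j|$; since $\phi$ is odd, $|\phi(y)-y|$ is even, so each such term is at most $n\mu(\theta)$, yielding the contribution $n\mu(\theta)\|u_0\|$.

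For the amplitude error I would again diagonalize $H=U\Lambda U^T$ and change to eigencoordinates, an orthogonal (hence norm-preserving) transformation under which every function of $H$ becomes diagonal. After regrouping the $2N$ coordinates oscillator by oscillator (another orthogonal permutation), $E(\tau H)$ becomes block diagonal with $2\times2$ blocks $E(y_j)$, and the vector it acts on has $j$-th block $\sin(n\phi(y_j))((\tilde q_0)_j,(\tilde p_0)_j)^T$. The squared norm is then a sum over oscillators of $|\sin(n\phi(y_j))|^2\,\|E(y_j)((\tilde q_0)_j,(\tilde p_0)_j)^T\|^2$; bounding $|\sin|\le1$, using $\|E(y_j)\|\le\nu(\theta)$ (valid on all of $[-\theta,\theta]$, since $\epsilon$ odd and $\gamma$ even make $\|E(y)\|$ even, as one checks via a similarity by diagonal sign matrices), and summing recovers $\nu(\theta)^2\|u_0\|^2$. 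Adding the two contributions gives the claimed bound.

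The main obstacle, and really the only nontrivial point, is the passage from the $2N$-dimensional operator norms to the pointwise scalar and $2\times2$ estimates. I expect the crux to be setting up the simultaneous diagonalization cleanly: one must exploit that, $H$ being real symmetric, the entries of $E(\tau H)$ together with $\phi(\tau H)$ and $\sin(n\phi(\tau H))$ are all functions of the \emph{same} matrix $H$, hence commute and diagonalize together, so the full $2N\times2N$ problem decouples into $N$ independent harmonic-oscillator problems whose scalar arguments $y_j$ lie in $[-\theta,\theta]$. Once this reduction is in place, the remaining inequalities are elementary.
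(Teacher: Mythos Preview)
Your proposal is correct and follows essentially the same route as the paper: both start from the decomposition (\ref{E-error}), split into phase and amplitude contributions via the triangle inequality, pass to the complex representation for the phase term, and reduce everything to scalar/$2\times2$ estimates via the spectral decomposition of $H$. The only cosmetic differences are that the paper bounds $\|1-\e^{-in(\phi-\tau H)}\|$ via the integral representation $1-\e^{-iX}=iX\int_0^1\e^{-iXs}\,ds$ rather than your direct $|\e^{ia}-\e^{ib}|\le|a-b|$, and handles the amplitude term as $\|E(\tau H)\|\cdot\|\sin(n\phi)u_0\|$ in one stroke rather than your oscillator-by-oscillator sum of squares.
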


\begin{proof}
From (\ref{E-error}), we can write
\begin{eqnarray*}
   \| u_n - u(n \tau) \| & = & \Big\| \! \left( \! \begin{array}{c}
   					q_n - q(t_n) \\
					p_n - p(t_n)
				        \end{array}	 \! \right) \! \Big\| \le \Big\| \big( O(n \phi) - O(n \tau H)\big) 
				           \left( \! \begin{array}{c}
				                           q_0 \\
				                           p_0
				                     \end{array} \! \right) \! \Big\|   \\
				        &  & + \, \|E(\tau H)\| \;       
				                     \Big\| \! \left( \! \begin{array}{c}
   					                   \sin (n \phi) q_0 \\
					                   \sin (n \phi) p_0
				                        \end{array}	\! \right) \! \Big\|,
\end{eqnarray*}				                        
where, for clarity, $\phi \equiv \phi(\tau H)$. For the first contribution we have
\begin{eqnarray*}
   \Big\| \big( O(n \phi) - O(n \tau H)\big) 
				           \left( \! \begin{array}{c}
				                           q_0 \\
				                           p_0
				                     \end{array} \! \right) \! \Big\|  &  = &  \| (\e^{-i n \phi}  - \e^{-i n \tau H}) u_0 \| 
	 =   \| \e^{-i n\tau\, H}(1- \e^{-i n(\phi -\tau H)})u_0 \| \\ 
	 &  \le & 	\|(1- \e^{-i n(\phi - \tau H)})\| \, \|u_0\|      
\end{eqnarray*}
since $H$ is Hermitian. Now
\begin{eqnarray*}
  \|(1- \e^{-i n(\phi - \tau H)})\| & = & \big\| i \, n \int_0^1 \e^{-i n (\phi - \tau H)s} \,  (\phi - \tau H) \, ds \big\| \\
    & \le & n \int_0^1 \|\e^{-i n (\phi - \tau H)s} \| \; \|\phi - \tau H\| \, ds = n  \, \|\phi - \tau H\|  \\
    & = & n \, 	\max_{1\leq j \leq N} |\phi(\tau\, \omega_j)-\tau \, \omega_j| \le n \, \mu(\theta).	                
\end{eqnarray*}   
As for the second contribution, one has 
\[
   \Big\| \! \left( \begin{array}{c}
   	            \sin (n \phi) q_0 \\
                     \sin (n \phi) p_0
             \end{array}	 \right) \! \Big\|   =  \|\sin(n \phi) u_0\| \le \|\sin(n \phi)\| \, \|u_0\| \le \|u_0\|,
\]
whereas
  \begin{eqnarray*}
    \|E(\tau\, H)\|= \max_{1\leq j \leq N} \|E(\tau \omega_j)\| \leq \nu(\theta). 
  \end{eqnarray*}
and thus the proof is complete.  
\end{proof}

Notice that the error estimate in the previous theorem does not guarantee that, for a given $t$, the error in approximating $\e^{-i\, t \, H}u_0$ by applying $n$ steps of the method is bounded as $\rho(H)\to \infty$. As
a matter of fact, since $\tau=t/n$ must satisfy the stability restriction $\theta = \tau\rho(H) <  y_*$, 
so that $n>t\, \rho(H)/y_*$, one has that 
$n$ (and hence the error bound above) goes to infinity as $\rho(H)\to \infty$. 
This 
 can be avoided
%, as it can be seen as a particular case of next theorem, 
by estimating the error in terms of $\|H u_0\|$ in addition to $\|u_0\|$.
The assumption that $\|H u_0\|$ can be bounded uniformly as the space discretization parameter 
$N\to \infty$, implies that 
the initial state $\psi(x,0)$ of the continuous time dependent Schr\"odinger equation
is such that $\partial_x^2 \psi(x,0)$ is square-integrable. The converse will also be true for reasonable space semi-discretizations and a sufficiently smooth potential $V(x)$. 

More generally, the assumption that $\psi(x,0)$ has sufficiently high spatial regularity (together with suitable conditions on the potential $V(x)$) is related to the existence of bounds of the form
$\|H^k u_0\| \le C_k$
that hold uniformly as $\rho(H)\to \infty$. In this sense, it is useful to introduce the following notation: 
\begin{itemize}
\item Given $k\geq 0$, we denote for each $u \in \mathbb{C}^N$
\begin{eqnarray*}
  \|u\|_{k} :=\|H^{k} u\|.
\end{eqnarray*}
%
%This norm may be seen as the discrete version of a continuous norm which is essentially equivalent
%to the Sobolev norm $\| \cdot \|_{H^s(0,2\pi)}$, with $s=2k$.

\item For a $m$-stage splitting method with stability threshold $y_*$, given $k\geq 0$ and $\theta \in [0,y_*)$ we denote
\begin{eqnarray}
\label{eq:mu}
  \mu_k(\theta) &=& \sup_{0\leq y \leq \theta} \left|\frac{\phi(y)}{y}-1\right|\, (\theta/y)^k, \\
\label{eq:nu}
  \nu_{k}(\theta) &=& \sup_{0\leq y \leq \theta} \|E(y)\|\, (\theta/y)^k.
\end{eqnarray}
Clearly, $\mu_k(\theta)$ and 
$\nu_{k}(\theta)$ are bounded if and only if the method is of order $r\geq k$.
\end{itemize}
We are now ready to state the main result of this section.
\begin{theorem}
\label{th:error}
  Given $u_0 = q_0+i p_0$ and $t \in \mathbb{R}^+$, let $n$ be such that $\tau=t/n=\theta/\rho(H)$ (with $\theta <  y_*$), and let  $u_n=q_n+ i p_n$ be the approximation to $u(t) = \e^{-i\, t\, H}u_0$ obtained by applying $n$ steps of length $\tau$ of a $r$-th order splitting method with stability threshold $y_*$. Then, for each $k\in [0,r]$, %it is true that
  \begin{eqnarray}
\label{error}
    \|u_n - u(t)\| \leq \frac{t \,\mu_{k}(\theta) \, \|u_0\|_{k+1}+\nu_{k}(\theta) \|u_0\|_{k}}{\rho(H)^k}.
  \end{eqnarray}
\end{theorem}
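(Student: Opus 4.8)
The plan is to refine the error decomposition (\ref{E-error}) that already underlies Theorem~\ref{th:error0}, this time keeping the initial vector $u_0$ inside every norm so that factors of $H$ can be extracted as powers of $\rho(H)$ together with the weighted norms $\|u_0\|_k$. Starting from (\ref{E-error}), I would bound $\|u_n-u(t)\|$ by the same two contributions used there: the rotation mismatch $\|(O(n\phi)-O(n\tau H))u_0\|$ and the term $\|E(\tau H)(\sin(n\phi)q_0,\sin(n\phi)p_0)^T\|$, with $\phi\equiv\phi(\tau H)$. Throughout I would diagonalize the real symmetric $H$ in a real orthonormal eigenbasis with eigenvalues $\omega_j$, so that every function of $H$ acts componentwise and $\|u_0\|_k^2=\sum_j|\omega_j|^{2k}(\hat q_{0,j}^2+\hat p_{0,j}^2)$, where $\hat q_{0,j},\hat p_{0,j}$ are the real components of $q_0,p_0$.

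For the first contribution I would reuse the identity $\|(O(n\phi)-O(n\tau H))u_0\|=\|(1-\e^{-in(\phi(\tau H)-\tau H)})u_0\|$ and its integral representation, but keep $u_0$ attached: since $\phi(\tau H)-\tau H$ is Hermitian the exponentials are unitary, giving $\|(1-\e^{-in(\phi(\tau H)-\tau H)})u_0\|\le n\,\|(\phi(\tau H)-\tau H)u_0\|$. The crucial new step is to factor the operator $\phi(\tau H)-\tau H$ as $m(H)\,H^{k+1}$, where on the eigenvalue $\omega_j$ the scalar is $m(\omega_j)=(\phi(\tau\omega_j)-\tau\omega_j)/\omega_j^{k+1}$. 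Writing $y=\tau\omega_j$ and using $|\omega_j|\le\rho(H)$, so that $|y|\le\theta$, the definition (\ref{eq:mu}) gives $|m(\omega_j)|=\tau^{k+1}|\phi(y)/y-1|/|y|^k\le\tau\mu_k(\theta)/\rho(H)^k$; the oddness of $\phi$ makes $|\phi(y)/y-1|$ even, so the supremum over $[0,\theta]$ covers negative eigenvalues too. Hence $\|(\phi(\tau H)-\tau H)u_0\|\le\|m(H)\|\,\|H^{k+1}u_0\|\le(\tau\mu_k(\theta)/\rho(H)^k)\,\|u_0\|_{k+1}$, and multiplying by $n$ and using $n\tau=t$ yields the first term $t\mu_k(\theta)\|u_0\|_{k+1}/\rho(H)^k$.

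For the second contribution I would work directly in the eigenbasis, where block $j$ of $E(\tau H)(\sin(n\phi)q_0,\sin(n\phi)p_0)^T$ equals $\sin(n\phi(\tau\omega_j))\,E(\tau\omega_j)(\hat q_{0,j},\hat p_{0,j})^T$. Using $|\sin|\le1$ and $\|E(\tau\omega_j)(\hat q_{0,j},\hat p_{0,j})^T\|\le\|E(\tau\omega_j)\|(\hat q_{0,j}^2+\hat p_{0,j}^2)^{1/2}$, and then $\|E(\tau\omega_j)\|\le\nu_k(\theta)|\omega_j|^k/\rho(H)^k$ from (\ref{eq:nu}) (extended to negative $y$ by the evenness of $\|E(\cdot)\|$, which follows from $E(-y)=-D\,E(y)\,D$ with $D=\mathrm{diag}(1,-1)$ in (\ref{matrix.E})), summing over $j$ gives $\|E(\tau H)(\sin(n\phi)q_0,\sin(n\phi)p_0)^T\|^2\le(\nu_k(\theta)/\rho(H)^k)^2\sum_j|\omega_j|^{2k}(\hat q_{0,j}^2+\hat p_{0,j}^2)=(\nu_k(\theta)/\rho(H)^k)^2\|u_0\|_k^2$. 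Taking square roots and adding the two contributions produces (\ref{error}).

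The bulk of the argument is routine bookkeeping inherited from Theorem~\ref{th:error0}; the one step needing genuine care is the spectral factorization $\phi(\tau H)-\tau H=m(H)H^{k+1}$, that is, transferring the pointwise bound encoded in $\mu_k(\theta)$ into the operator norm of $m(H)$ while correctly converting powers of $\tau$ and $|\omega_j|$ into the single factor $\rho(H)^{-k}$ and the weighted norm $\|u_0\|_{k+1}$. The parity remarks ($|\phi(y)/y-1|$ and $\|E(\cdot)\|$ even in $y$) are minor but must be recorded, so that the suprema over $[0,\theta]$ in (\ref{eq:mu})--(\ref{eq:nu}) legitimately bound the contributions coming from the negative eigenvalues of $H$.
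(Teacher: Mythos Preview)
Your proof is correct and follows essentially the same route as the paper's: both arguments refine the two-term decomposition from Theorem~\ref{th:error0} by extracting a factor $H^{k+1}$ (resp.\ $H^{k}$) from the rotation-mismatch term (resp.\ the $E$-term) and bounding the remaining scalar factor on each eigenvalue via the definitions (\ref{eq:mu})--(\ref{eq:nu}). The only cosmetic difference is that the paper phrases the factorization as inserting $(\tau H)^{-k-1}(\tau H)^{k+1}$ and bounding an operator norm, whereas you work componentwise in the eigenbasis; your explicit parity remarks for negative eigenvalues are a welcome clarification that the paper leaves implicit.
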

\begin{proof}
We proceed as in the proof of Theorem~\ref{th:error0}. First we bound 
   \begin{eqnarray*}
\|\e^{-i n\tau\, H}u_0- \e^{-i n\phi(\tau\, H)}u_0\| &\leq& 
\tau^{k+1} \, \|(\tau H)^{-k-1}(1- \e^{-i n(\phi(\tau \, H)-\tau H)})\| \, \|u_0\|_{k+1} \\
&\leq& \frac{t \theta^k}{\rho(H)^k} \|(\tau H)^{-k-1} (\phi(\tau\, H)-\tau \, H))\| \, \|u_0\|_{k+1},
   \end{eqnarray*}
with
\begin{eqnarray*}
 \|(\tau H)^{-k-1}(\phi(\tau\, H)-\tau \, H)\| =\max_{1\leq j \leq N} 
    |(\tau \omega_j)^{-k-1}(\phi(\tau\, \omega_j)-\tau \, \omega_j)| 
\leq \mu_k(\theta)/\theta^{k}.
\end{eqnarray*}
Then the second term in (\ref{E-error}) verifies
\begin{eqnarray*}
  \tau^k \, \|(\tau H)^{-k}E(\tau\, H) \sin(n\phi(\tau \, H)) H^{k}u_0\| \leq 
     \frac{\theta^k}{\rho(H)^k} \|(\tau H)^{-k} E(\tau\, H)\| \, \|u_0\|_k,
\end{eqnarray*}
 and
  \begin{eqnarray*}
    \|(\tau H)^{-k} E(\tau\, H)\|= \max_{1\leq j \leq N} \|(\tau \omega_j)^{-k} E(\tau \omega_j)\| \leq \nu_k(\theta)/\theta^k, 
  \end{eqnarray*}
from which (\ref{error}) is readily obtained.  
\end{proof}

Some remarks are in order at this point:
\begin{enumerate}
  \item Recall that the estimate in Theorem~\ref{th:lubich} shows the behavior of the space discretization error (of a spectral collocation method applied to the 1D Schr\"odinger equation) as the number $N$ of collocation points goes to infinity. Our estimate (\ref{error}) shows in turn the behavior of the time discretization error as  $N\to \infty$, provided that $\tau=\theta/\rho(H)$ with a \emph{fixed} $\theta<y^*$. In that case, it can be shown that $\rho(H)^{-1}\leq L\, N^{-2}$ uniformly for all $N$, and thus the error of the full discretization admits the estimate
   \begin{eqnarray*}
 \frac{1}{N^{2k}} \left(C (1+t) \displaystyle \max_{0 \le t' \le t} \, \left\|\partial_x^{2k+2} \psi(\cdot,t') \right\|
    +L \big( t\, \mu_{k}(\theta) \|u_0\|_{k+1}+  \nu_{k}(\theta) \|u_0\|_{k}\big) \right).
    \end{eqnarray*}
Notice the similarity of both the space and time discretization errors
($\|u_0\|_{k+1}=\|H^{k+1} u_0\|$ is a discrete version of a continuous norm $||\psi(\cdot,0)||_{k+1}$ which is equivalent to the Sobolev norm $\|\partial_x^{2k+2} \psi(\cdot,0) \|$).

\item Given a splitting method with stability threshold $y^*$ of order $r$ for the harmonic oscillator, consider $\mu_r(\theta)$ and $\nu_r(\theta)$ in  (\ref{eq:mu})-(\ref{eq:nu}) for a fixed $\theta<y^*$. 
If instead of analyzing the behavior as $N$ increases of the time discretization error committed by the
splitting method when applied with $\tau=\theta /\rho(H)$, 
one is interested in analyzing the error with fixed $H$ and \emph{decreasing} $\tau \leq \theta/ \rho(H)$, 
one proceeds as follows. Since by definition 
\begin{eqnarray*}
  \mu_r(\tau \rho(H) ) \leq \mu_r(\theta) \left(\frac{\rho(H) \tau}{\theta}\right)^r, \quad
 \nu_r(\tau \rho(H) ) \leq \nu_r(\theta) \left(\frac{\rho(H) \tau}{\theta}\right)^r, 
\end{eqnarray*}
then reasoning as in the proof of Theorem \ref{th:error},
one gets the estimate
\begin{eqnarray*}
      \|u_n - u(t)\| \leq \frac{t \,\mu_{r}(\theta) \, \|u_0\|_{r+1}+\nu_{r}(\theta) \|u_0\|_{r}}{\theta^r}\, \tau^r.
\end{eqnarray*}
\end{enumerate}  

From a practical point of view, (\ref{error}) can be used to obtain \textit{a priori} error estimates just by replacing the exact $\rho(H)$ by an approximation (obtained for instance with some generalization of the power method), or by an estimation based on the knowledge of bounds of the potential and the eigenvalues of the discretized Laplacian.
% (In other words, 
%$\rho(H) \simeq \rho(T)  +(V_{max}-V_{min})$, although this approximation tends to overestimate the actual
%value of $\rho(H)$.

The error estimates in Theorem~\ref{th:error} provide us appropriate criteria to construct splitting methods to be applied for the time integration of systems of the form (\ref{td.10}) that result from the spatial semi-discretization of the time dependent Schr\"odinger equation. Such error estimates suggest in particular that different splitting methods should be used depending on the smoothness of 
initial state in the original equation. Also, Theorem~\ref{th:error} indicates that for sufficiently long time integrations, the actual error will be dominated by the phase errors, that is, the errors corresponding to $\mu_{k}(\theta)$.

\section{On the construction of new symplectic splitting methods}
\label{sec.4}

Observe that when comparing the error estimates in Theorem~\ref{th:error} for a given $k \ge 0$ 
corresponding to two methods with different number of stages $m$ and $m'$ respectively, one should consider time steps $\tau$ and $\tau'$ that are proportional to $m$ and $m'$ respectively.
In this way, the same computational effort is needed for both methods to obtain a numerical approximation of $u(t)$ for a given $t>0$. It makes sense,
then, to consider a {\em scaled time step} of the application with time step $\tau$ of a $m$-stage splitting method to the system (\ref{td.10}). This can be defined as 
\begin{equation}  \label{thetaprime}
   \theta' \equiv \frac{\theta}{m}  =   \frac{\tau \rho(H)}{m},
\end{equation}   
so that
the relevant error coefficients associated to the error estimates in Theorem~\ref{th:error} are  
$\mu_{k}(\theta' m)$ and $\nu_{k}(\theta' m)$.

The task of constructing a splitting method in this family can be thus precisely formulated as follows.

\

\noindent \textbf{Problem}. \textit{Given a fixed number $m$ of stages in (\ref{prod-gen2p2}), and for prescribed values of $k \ge 0$ and scaled time step $\theta' \in (0,2)$, design some splitting method having order $r\geq k$ and stability threshold $y_* > \theta' m$, which tries to optimize the main error coefficient $\mu_{k}(\theta' m)$ while keeping $\nu_{k}(\theta' m)$ reasonably small}. 

\

We have observed, however, that trying to construct such optimized methods in terms of the coefficients of the polynomial entries $K_j(y)$ ($j=1,2,3,4$) of the propagation matrix $K(y)$ leads us to very ill-conditioned systems of algebraic equations. That difficulty can be partly overcome by taking into account the following observations: 
\begin{itemize}
\item The functions $|\phi(y)/y-1|$ and $\|E(y)\|$ ($y \in (-y_*,y_*)$) determining the error estimates in Theorem~\ref{th:error} uniquely depend on two polynomials: the stability polynomial $p(y)$ given in (\ref{stab-pol}) and 
  \begin{eqnarray}
\label{qpol}
    q(y) = \frac{K_2(y)-K_3(y)}{2}.
  \end{eqnarray}
Indeed, from one hand, $\phi(y)=\arccos(p(y))$, so that $|\phi(y)/y-1|$ uniquely depends on $p(y)$. On the other hand, according to Proposition~\ref{p:2}, 
\begin{eqnarray*}
  q(y) = (1+\frac12 \delta(y))\, \sin(\phi(y)), \quad
\mbox{where} \quad
  \delta(y) = \left( \gamma(y) + \frac{1+\epsilon(y)^2}{\gamma(y)}\right)-2, 
\end{eqnarray*}
and one can get by straigthforward algebra that $\|E(y)\|$ is (in Euclidean norm)  
\begin{eqnarray*}
 \|E(y)\| = \sqrt{\delta(y) \left( 1+ \frac{\delta(y)}{2}+\sqrt{\delta(y)+ \frac{\delta(y)^2}{4}}\right)}
\end{eqnarray*}
(and thus $\|E(y)\|=\sqrt{\delta(y)} + \mathcal{O}(\delta(y))$ as $\delta(y) \to 0$).
\item Given an even polynomial $p(y)$ and an odd polynomial $q(y)$, there exist a finite number of propagation matrices $K(y)$ such that (\ref{stab-pol}) and (\ref{qpol}) hold.
Indeed, the entries of such stability matrices are of the form
\begin{eqnarray*}
  K_1(y) = p(y)+d(y), \qquad
  K_2(y) = q(y)+e(y), \\
  K_3(y) = -q(y)+e(y), \qquad
  K_4(y) = p(y)-d(y),
\end{eqnarray*}
where $d(y)$ and $e(y)$ are respectively even and odd polynomials satisfying 
\begin{eqnarray}
\label{pqde}
  p(y)^2+q(y)^2-1 = d(y)^2+e(y)^2.
\end{eqnarray}
It is not difficult to see that there is a finite number of choices for such polynomials $d(y)$ and $e(y)$. The ill-conditioning mentioned before seems to come mainly from the ill-conditioning of the problem of determining $d(y)$ and $e(y)$ from prescribed polynomials $p(y)$ and $q(y)$. Obviously, a necessary condition for the existence of such polynomials $d(y)$ and $e(y)$ with real coefficients is that $p(y)^2+q(y)^2-1 \geq 0$ for all $y$. It is also straightforward to see that, for a $r$th order method, $d(y)=\mathcal{O}(y^{r+1})$ and $e(y)=\mathcal{O}(y^{r+1})$ (as $y\to 0$), and thus
\begin{eqnarray}
  \label{p2q2}
  p(y)^2+q(y)^2-1=\mathcal{O}(y^{2r+2}). 
\end{eqnarray}
In addition, if the method has stability threshold $y_*>0$, then there exists $0<y_1<\cdots<y_l<y_*$ such that $\phi(y_j)=j \pi$, and thus
\begin{eqnarray}
\label{stab-cond}
  p(y_j) = (-1)^j, \quad p'(y_j)=0, \quad q(y_j)=0, \quad \mbox{for} \quad j=1,\ldots,l.
\end{eqnarray}

\end{itemize}

For simplicity, we restrict ourselves to the construction of $m$-stage methods of even order $r$ that are intended to have small values of $\mu_{r}(\theta' m)$ and $\nu_{r}(\theta' m)$ for a prescribed scaled time
step $\theta' \in (0,2)$. When designing such a method, we follow several steps:
\begin{enumerate}
\item First find two polynomials $p(y)$ and $q(y)$ with small value of 
$\mu_{r}(\theta' m) + \lambda \nu_{r}(\theta' m)$ (for some $\lambda<1$)
among those satisfying the following three conditions:
  \begin{enumerate}
  \item There exist $y_j \approx j \pi$ $(j=1,\ldots,l)$ with $l\, \pi \leq \theta' m \leq (l+1)\pi$ such that (\ref{stab-cond}) holds;
  \item $p(y) = \cos(y) + \mathcal{O}(y^{r+1})$,  $q(y) = \sin(y) + \mathcal{O}(y^{r+1})$,
and (\ref{p2q2}) as $y\to 0$; 
  \item $p(y)^2+q(y)^2-1>0$ for all $y \in \mathbb{R}$.
  \end{enumerate}
\item Find all possible pairs $(d(y),e(y))$ of real (even and odd respectively) polynomials satisfying $(\ref{pqde})$, and for each pair $(d(y),e(y))$, construct the corresponding $2\times 2$ matrix $K(y)$.
\item Apply the algorithm given in \cite{{blanes08otl}} to each of the matrices $K(y)$ obtained in the previous step. In this way we will get the vector of coefficients $(a_i,b_i)$ of all the splitting methods having a progagation matrix $K(y)$ satisfying (\ref{stab-pol}) and (\ref{qpol}) for the pair of polynomials $(p(y),q(y))$ determined in the first step. Since Theorem~\ref{th:error} gives exactly the same error estimate (\ref{error}) for all the splitting schemes obtained in that way, we choose (with the aim of reducing the effect of round-off errors) one that minimizes 
  \begin{eqnarray*}
    \sum_{j=1}^{m} (|a_j|+|b_j|).
  \end{eqnarray*}
\end{enumerate}

This procedure has been applied to construct several splitting methods of different orders $r$, number of
stages $m$ and scaled time steps $\theta'$. We collect in Table \ref{tab:1} the relevant
parameters of some of them, whereas the actual coefficients $a_j,b_j$ can be found at 
\texttt{www.gicas.uji.es/Research/splitting1.html}. As a matter of fact, all the methods have $m+1$ pairs
of coefficients $a_i$, $b_i$, but $b_{m+1}=0$. In consequence, the last stage at a given step can be
concatenated with the first one at the next step, so that the overall number of stages is $m$. This property
is called FSAL (first-same-as-last) in the numerical analysis literature.
According to the previous comments, the new schemes are aimed at integrating equation (\ref{td.10})
under very different conditions of regularity.

\begin{table}[t]
\begin{eqnarray*}
  \begin{array}{|ccccccc|} \hline
m & r & \theta' & y_*/m & \sum_{j} (|a_j|+|b_j|) & \mu_r(\theta' m) & \nu_r(\theta' m) \\ \hline
10 &  6 &  1 &  1.1617 &   4.022 &  0.0009341 &  0.0372 \\
%  15 &  10 &  1 &  1.1346 & 3.0525 & 0.0005135 &  0.02085 \\
  20 &  16 &  1 &  1.0456 &  3.0553 & 0.000611028 & 0.0258433 \\
%25 &  18 &  1 & 1.0983 & 3.425 &  0.0004287 &  0.02504 \\ 
  30 &  24 &  1 & 1.0246 & 3.19658 &  0.0000841871 & 0.0373544 \\
 30 &  6   & 1.4 &  1.41876 &   3.0921 &  0.0000518519 &  0.0131295 \\ 
  30 &  0   &  1 & 1.1411 & 3.04948 & 2.91902\cdot 10^{-13} & 2.28673\cdot10^{-9}\\ 
  30 &  0   &  0.75 & 1.027 & 3.44381 & 1.2545\cdot 10^{-17} & 5.96706\cdot10^{-14}\\    
30 &  0   &  0.5 & 0.937874 & 3.84442 & 7.96031\cdot 10^{-24} & 6.66693\cdot10^{-18}\\ 
  40 &  0   &  1 &  1.15953 &  3.21986 &  1.06301 \cdot 10^{-15} &  1.07587\cdot 10^{-12} \\ \hline  
  \end{array}
\end{eqnarray*}
 \caption{Relevant parameters of several new splitting methods of order $r$
% with coefficients $a_j$, $b_j$ 
especially designed to integrate with scaled time step $\theta' = \tau \rho(H)/m$ the semi-discretized Schr\"odinger equation.  Here $\rho(H)$ is the spectral radius of the matrix $H$, $m$ is the number
 of stages, $y_*$ stands for the stability threshold and $\mu_r(\theta)$, $\nu_r(\theta)$ are the coefficients appearing in the error estimate (\ref{error}).}
  \label{tab:1}
\end{table}

The first three methods in Table~\ref{tab:1} are designed to be applied with the same scaled time-step $\theta'=\tau\rho(H)/m=1$, and thus the three of them have the same computational cost.
The order $r$ of the methods is increased by adding more stages, while keeping reasonably small error coefficients $\mu_r(\theta)$ and $\nu_r(\theta)$. This will be advantageous, according to Theorem~\ref{th:error}, for sufficiently regular initial states. Alternatively, one may want to use the additional number of stages to reduce the computational cost while keeping the same order $r=6$. This can be illustrated with the fourth method in Table~\ref{tab:1}, which has been optimized for scaled time-step $\theta'=1.4$, and thus is substantially cheaper than  the first method (optimized for $\theta'=1$), while having smaller error coefficients $\mu_6(\theta)$ and $\nu_6(\theta)$.

We now turn our attention to the methods of order zero in Table~\ref{tab:1}, which according to Theorem~\ref{th:error}, are the methods of choice for very low regularity conditions.
Although they have comparatively smaller error coefficients than the methods of order $r>1$, one should bear in mind that the error estimates (\ref{error}) for $r \geq k >1$ decrease with $\rho(H)^{-k}$ as the spectral radius $\rho(H)$ increases, while for methods of order $r=0$ the same estimate holds independently of the size of $\rho(H)$. Comparing the first three methods of order $r=0$ and $m=30$, we see that, not surprisingly, the accuracy of the methods can be improved by increasing the computational cost (by considering methods optimized for lower values of $\theta'$). This is analogous to increasing the accuracy of the application of a Chebyshev polynomial of degree $m=30$ by decreasing the time-step size. Now, by comparing the first $30$-stage method of order $0$ with the method with $m=40$ and order $0$, we see that the accuracy can be increased also by increasing the number of stages from $m=30$ to $m=40$ while keeping the same computational cost (with $\theta'=1$). This is similar to increasing the accuracy of Chebyshev approximations, while keeping the same computational cost, by increasing the degree of the polynomial from $m=30$ to $m=40$.

Recall that, if $\omega_1,\ldots,\omega_N$ are the eigenvalues of $H$, numerically integrating  (\ref{td.10}) by a splitting method is mathematically equivalent to applying the splitting method to $N$ uncoupled harmonic oscillators with frequencies $\omega_j$. Particularizing the proof of Theorem~\ref{th:error} 
to this case, it is quite straightforward to conclude that, 
when integrating the system with scaled time step $\theta'$ (that is, with $\tau=m \theta'/\rho(H)$, where $m$ is the number of stages of the scheme,)
 the relative error made in each oscillator can be bounded by
\begin{eqnarray*}
  t \, |\omega_j| \, \mu_j +\nu_j,
\end{eqnarray*}
where
\begin{eqnarray*}
  \mu_j = \left| \frac{\phi(m y_j)}{(my_j)}-1 \right|, \quad \nu_j = \|E(m y_j)\|\quad
\mbox{with} \quad |y_j| = \frac{\tau \, \omega_j}{m} = \frac{\theta' \, \omega_j}{\rho(H)}
\end{eqnarray*}
and thus $|y_j|\leq \theta'$.
When such a system of harmonic oscillators originates from a continuous problem 
possessing a high degree of regularity,
the highest frequency oscillators have much smaller amplitude and thus can be approximated less accurately than the lower frequency oscillators without compromising the overall precision. For lower regularity conditions, the overall precision will be more affected by the accuracy of the approximations corresponding to higher frequency oscillators.

\begin{figure}[!ht]
  \centering
  \includegraphics[scale=1.4]{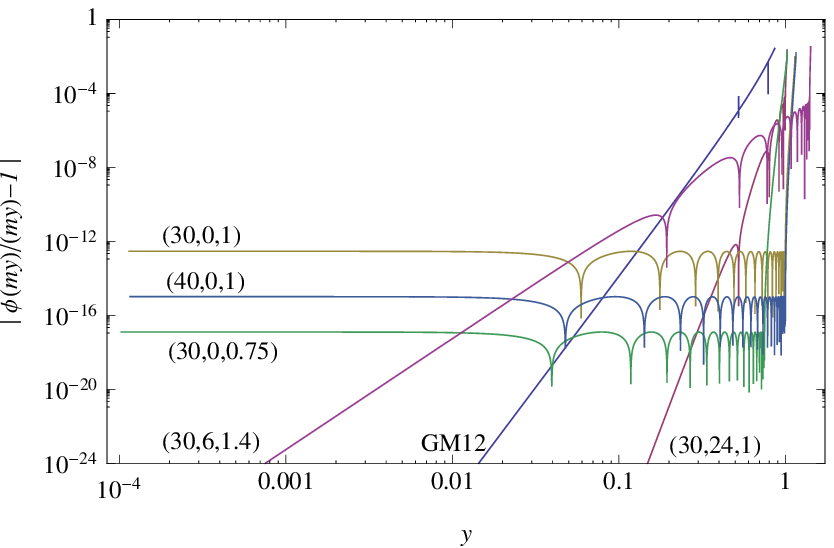}
  \includegraphics[scale=1.4]{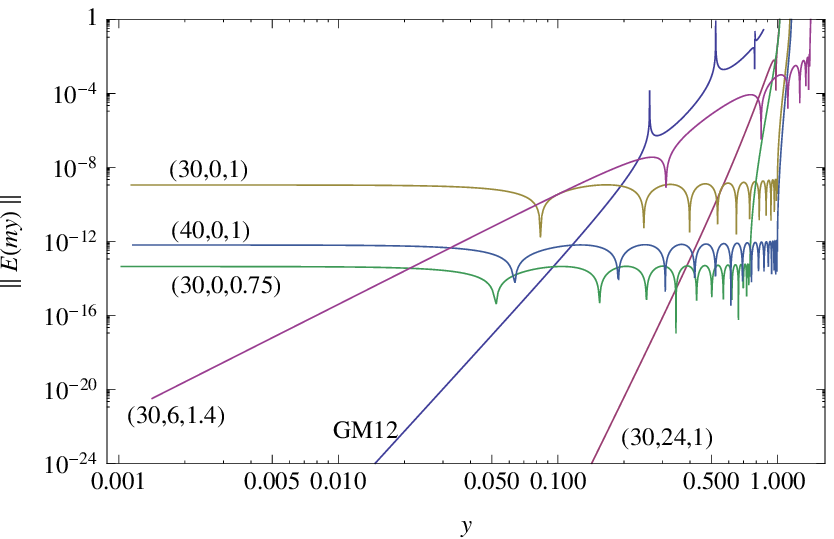}
  \caption{Graphs of $|\phi(m y)/(my)-1|$ (top) and  $\|E(m y)\|$ (bottom) for some of the $m$-stage splitting
  methods collected in Table \ref{tab:1} and the 12th-order scheme
$\mathrm{GM}12$. Each new splitting method is identified by the triad $(m,r,\theta')$, indicating its
number of stages $m$, order $r$ and scaled time step $\theta'$, as in Table \ref{tab:1}.
The resonances associated with the instability of $\mathrm{GM}12$
at $y \approx k \pi$ ($k=2,3$) are visible, especially in the second graph.}
  \label{fig:1}
\end{figure}

With the aim of illustrating the relative error made in each harmonic oscillator, in Figure~\ref{fig:1} 
we represent  (in double logarithmic scale) $|\phi(m y)/(my)-1|$ and $\|E(m y)\|$ (which are even functions of $y$) for
$ y \in [0,\theta']$ 
 for some of the methods collected in Table~\ref{tab:1}, identified by appropriate labels indicating their
 respective number of stages, order and scaled time step $(m, r, \theta')$.
Observe that both functions of $y$ exhibit a similar behavior for each splitting method,
although the values taken by the second one are several orders of magnitude smaller, since the methods
are designed to minimize mainly the phase error coefficient $\mu_k(\theta)$. The order $r$ of each of the methods is reflected in the slope of the curves as $y$ approaches $0$.

We also include in Figure~\ref{fig:1} the $12$th order $12$-stage scheme presented in~\cite{gray96sit}, which is perhaps the most efficient when applied to harmonic oscillators among those (non-processed) splitting methods currently found in the literature. We denote it by $\mathrm{GM}12$. It has a relative stability threshold $y_*/12 = 0.2618$,
so that, strictly speaking, it should be used with $\theta' = \tau \rho(H)/12< 0.2618$ to guarantee stability. However, 
it seems in practice that the method can be safely used with $\theta' = 0.932$ (for a larger value of the scaled time step $\theta'$, the method becomes very
unstable), because $||p(y)|-1| < 10^{-6}$ provided that $|y|/12< 0.932183$. 
The theoretical instability for $\theta' \in (0.2618,0.932183)$ is only relevant after
a very large number of steps, and reveals itself as 
resonance peaks (which are clearly visible in the graph of $\|E(my)\|$ in Figure~\ref{fig:1} for $k=2,3$) near the values $\theta' = k \pi/12$, $k=1,2,3$.

We can see in Figure~\ref{fig:1} that $\mathrm{GM}12$ is less accurate than the $30$-stage $24$th order method for the whole frequency range, and thus the former will show a poorer performance than the later for any regularity conditions. If the amplitudes at higher frequencies decrease fast enough, the $24$th order method will give very accurate approximations of  $u(t) = \e^{-i t H} \, u_0$ at a relatively low cost (since $\theta'=1$). The $6$th order method with $m=30$ stages gives correct approximations for all harmonic oscillators within the range $y \in [-1.4,1.4]$, and hence it is expected to give excellent approximations under mild regularity with a comparatively lower cost than the $24$th order method ($\theta'=1.4$ compared with $\theta'=1$).
Clearly, the methods of order $0$ will be the right choices for low regularity conditions, since the corresponding phase errors $|\phi(m y)/(m y)-1|$ are uniformly bounded for all $y \in [-1,1]$. Among them, the method with $m=30$ and $\theta'=1$ can be accurate enough in many practical computations. If more precision is required, one can either consider the method with $\theta'=0.75$ (with result in a 25\% increase of the computational cost), or use the method with $m=40$ and $\theta'=1$, without any increase of the computational cost (at the expense of having less frequent output).

\section{Numerical examples}
\label{sec.5}

The purpose of this section is twofold. On the one hand, since the symplectic splitting
methods we have presented here to approximate $\e^{-itH}u_0$ involve only
products of the matrix $H$ with vectors, it makes sense to compare them with
other well established schemes of this kind, such as the Chebyshev and
Lanczos methods. Although a thorough comparison with the family of 
splitting methods proposed in this work will be the subject of a 
forthcoming paper \cite{blanes10splitt}, we include here some results which show that the new schemes are
indeed competitive for evaluating $\exp(-i t H) u_0$,
at least in the example considered.

On the other hand, since 
Theorem~\ref{th:error} provides a rigorous \emph{a priori}
estimate on the error committed  when using a splitting method of
the form (\ref{prod-gen2p2}) in the time integration of equation
(\ref{td.1}), it is interesting to check how this theoretical error estimate behave
in practice for some of the methods constructed here.

\subsection{A preliminary comparison with Chebyshev and Lanczos methods}

As is well known, Chebyshev and Lanczos methods provide high order polynomial approximations
to $\e^{-itH}u_0$ requiring only matrix-vector products. The former is neither unitary nor 
symplectic, whereas the later is unitary, but
symplectic only in the Krylov subspace (which changes from one time step to the next).

To carry out this comparison we choose the very simple example previously considered in
\cite{lubich08fqt}. The problem consists in approximating
$y= \e^{-iA}v$, where $v$ is a random vector
of unit norm and $A$ is the tridiagonal matrix 
$A=\frac{\omega}{2} \, \mbox{tridiag}(-1,2,-1)$ of dimension 10000.
The eigenvalues of $A$ are contained in the interval $[0,2\omega]$. 

We have
implemented the Chebyshev and Lanczos algorithms in the usual
way (see \cite{lubich08fqt}) with the particularity that, since the
range of values for the eigenvalues is known, both the Chebyshev and the new
splitting methods are used with a shift to the midpoint of the spectrum. In other
words, $y = \e^{-i\omega I} \, \e^{-i(A-\omega I)}v$, with $I$
the identity matrix. This shift allows us to take $\rho(A- \omega I)\simeq
\omega$. 

\begin{figure}[h!]
 \centering
  \includegraphics[height=12cm,width=14cm]{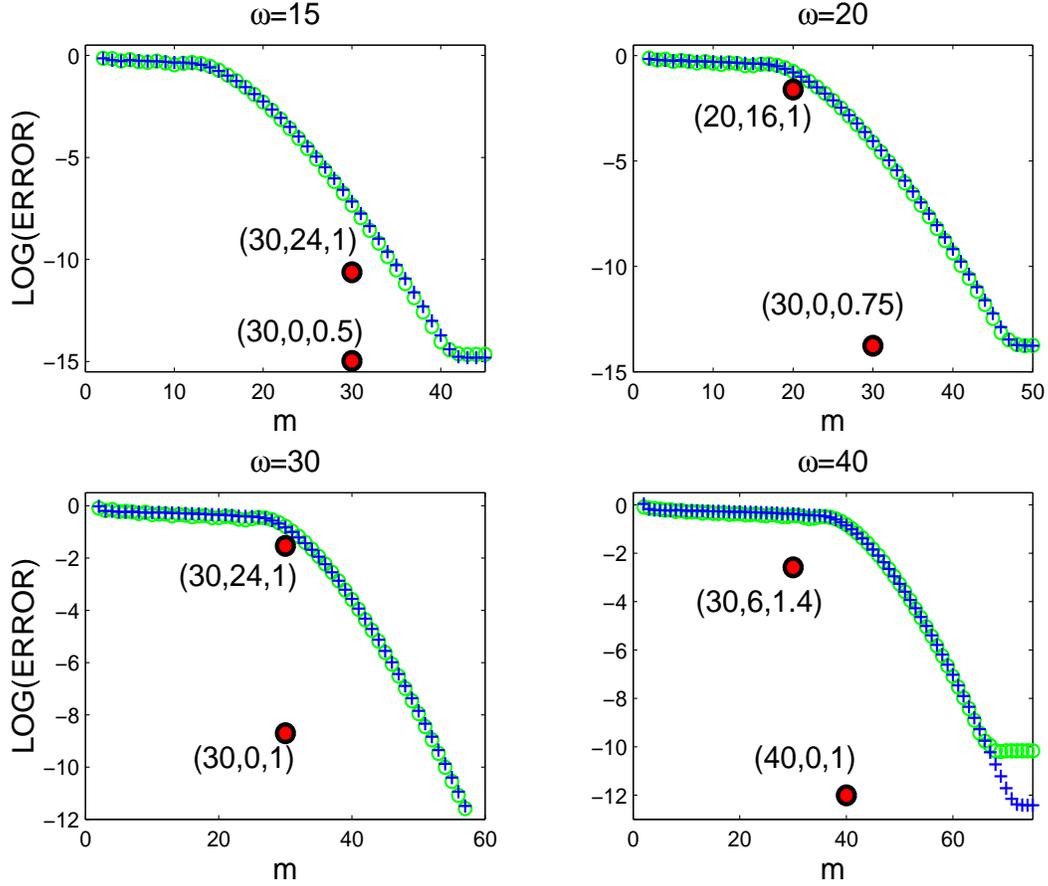}
\caption{{
 Error, $\|y-y_{ap}\|$, versus degree of the polynomial, $m$, for approximations
to $y=\e^{-iA}v$ where $v$ is chosen as a random vector of unit
norm and $A$ is the tridiagonal matrix 
$A=\frac{\omega}{2} \, \mbox{tridiag}(-1,2,-1)$ of dimension 10000.
Here $y$ is computed numerically to high accuracy and $y_{ap}$
correspond to the approximate solutions obtained by each method.
Results corresponding to Lanczos (crosses), Chebyshev 
(small circles) and several new splitting 
$(m,r,\theta')$ methods (big circles) are depicted. }}
 \label{figLubich}
\end{figure}

Figure~\ref{figLubich} shows the error, $\|y-y_{ap}\|$ for
different degrees $m$ of the polynomials used and for
$\omega=15,20,30,40$. Here $y$ is computed
numerically to high accuracy and $y_{ap}$ corresponds to the
approximate solution obtained by each scheme. Each particular value of
$m$ in the Lanczos and Chebyshev methods corresponds to a different
$m$th-order polynomial approximation (denoted by small crosses and
circles, respectively). We clearly observe that, for this
irregular problem, the Lanczos method converges to the optimal
Chebyshev method, the main difference between both schemes being 
the number of vectors to be kept in memory.

To apply the new splitting methods, we notice that for this problem the
time step $\tau=1$, and the corresponding spectral radius
$\rho\simeq \omega$. Therefore we shall consider splitting
methods whose value of $\theta'$ given by (\ref{thetaprime}) satisfies
\[
  \frac{\tau \rho}{m}\simeq \frac{\omega}{m} \leq \theta'.
\]
In other words, for each $\omega$ the method $(m,r,\theta')$ is such that
$m \, \theta'\geq \omega$. 

From the graphs of Figure~\ref{figLubich}, it is clear that, for
each value of $\omega$, we can always select one particular 
splitting method (big dots) which outperforms both Chebyshev
and Lanczos. High-order splitting methods show a worst
performance than schemes of order zero for this problem, since they require typically a
higher degree of regularity. 

%The
%performance of the Lanczos methods strongly depend on the
%Hamiltonian as well as on the initial conditions, being highly
%problem dependent. The classes of problems (regularity of the
%initial conditions, the spectrum of the Hamiltonian, the dimension
%of the matrices, etc.) where each family of methods is superior is
%under study at this moment.

\subsection{The P\"oschl--Teller potential}

We next illustrate the error estimate provided by
Theorem~\ref{th:error} for the class of splitting methods proposed here.
We also compare the error in the time integration
with the error coming from the space discretization for different values of the mesh size $N$. 
For that purpose we choose a well known
anharmonic quantum potential leading to analytical solutions and
consider, for clarity, only the 30-stage splitting methods of
order six and order zero collected in Table \ref{tab:1}. Specifically, we
consider the P\"oschl--Teller potential 
 \[
  V(x) = -\frac{\alpha^2}{2\mu}  \frac{\lambda(\lambda-1)}{\cosh^2(\alpha x)},
%  V(x)=-\frac{1}{\cosh^2(x)} ,
 \]
with $\lambda > 1$. It has been frequently used in polyatomic
molecular simulation and is also of interest in
supersymmetry, group symmetry, the study of solitons, etc.
\cite{shi07fmi,flugge71pqm,lemus02cot}. The parameter $\lambda$ gives the
depth of the well, whereas $\alpha$ is related to the range of the
potential. The energies are
 \[
 E_n =-\frac{\alpha^2}{2\mu} (\lambda - 1 - n)^2, \qquad \mbox{ with }  \; 0\leq n \le \lambda - 1.
 \]
%For a better comparison with the previous example, we take the
%same value of the mass as before,

We take the following values for the parameters (in atomic units, a.u.): the
reduced mass $\mu= 1745$ a.u., $\alpha=2, \, \lambda=24.5$
(leading to 24 bounded states), $x\in[-5,5]$, and assume the
system is periodic. The periodic potential is continuous and very
close to differentiable. For $N=128$ we have $\rho(H)\simeq
0.635$, whereas for $N=256$ one gets $\rho(H)\simeq 1.85$.

We take as initial condition the Gaussian function, $\psi
(x,0)=\sigma \, \e^{-b^2x^2}$, where $\sigma$ is a normalizing
constant. With $b=3$ the function and all its derivatives of
practical interest vanish up to round off accuracy at the
boundaries. The initial conditions contain part of the continuous
spectrum, but this fact does not cause any trouble due to the
smoothness of the periodic potential and wave function. As an
illustration,  some of the corresponding values of $\|u_0\|_k$ for
$N=128$ are: $\|u_0\|_1=0.629909$, $\|u_0\|_6=0.0722513$,
$\|u_0\|_7=0.0478388$. They decrease only moderately for the first
values of $k$ (before they increase again due to the
contributions coming from higher energies). The corresponding values for $N=256$ 
are quite similar
to the previous ones. For this problem, both large and
very small spatial errors are expected from spectral methods,
depending on the mesh employed. It is then useful to have different
methods with large values of $\theta'$ when low accuracy is
desired and smaller values of  $\theta'$ for high accuracy.

We integrate for $t\in[0,128 \,T]$ with $T=333$ and measure the
2-norm error in the discrete wave function, $\|u_{ex}(2^i T)-u_{ap}(2^i
T)\|$, for $i=0,1,\ldots,7$. The values $u_{ex}$ are
computed using the same spatial discretization and an accurate
time integration (using a very small time step), whereas $u_{ap}$ stand for
the numerical approximations obtained with splitting methods. For a
given spatial discretization, we choose the time step for each of
the new methods such that $\tau\leq m \, \theta'/\rho(H)$. In consequence, a
period $T$ has to be divided into $M$ steps such that $M=T/\tau\geq
T\rho(H) / (m \theta')$. In particular, for the 6th-order method $(30,6,1.4)$
and $N=256$, since
$\rho(H)\simeq 1.85$, we take  $M=15\geq (333\times 1.85)/ (30
\times 1.4)$ (each period $T$ requires 15 steps, 450 stages or
1800 FFTs calls).

%Figure~\ref{fig3M} shows the results we obtain. The new
%non-symmetric methods are labelled as N$_mr$, where $m$ is the
%number of stages and $r$ is the order (N$_{30}0$ corresponds to
%the scheme with $\theta'=1$).

The results obtained are shown in Figure~\ref{fig3PT}. Solid lines
represent the error with respect to the exact solution for the
same spatial discretization, whereas dashed lines correspond to
the total error with respect to the exact solution obtained with a
finer mesh (it is the sum of the spatial error and the error from
the time integration). Dotted lines are obtained with the estimate
(\ref{error}).

\begin{figure}[t!]
  \centering
   \includegraphics[scale=0.75]{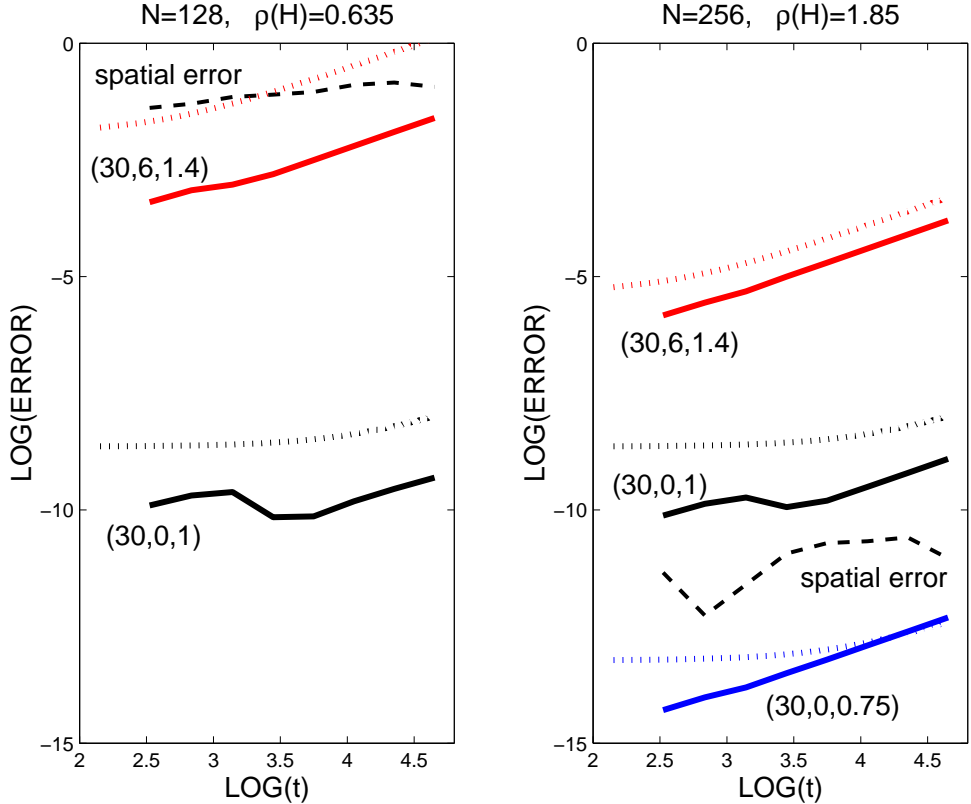}
  \caption{
Error in time integration (solid lines), error bounds from
(\ref{error}) (dotted lines) for methods $(30,6,1.4)$ and
$(30,0,1)$, and the spatial error (dashed lines), along the
interval $t\in[0,128 \, T]$ for the P\"oschl--Teller potential. We
have also included in the right panel the results obtained with
the 30-stage method of order zero and $\theta'=0.75$, 
$(30,0,0.75)$.}
  \label{fig3PT}
\end{figure}

We observe that the spatial error decreases exponentially with $N$
due to the smoothness and periodicity of the problem. To estimate this spatial error we take
the results obtained with $N=512$ and an accurate time integration
as the exact solution, and compare with the solution computed up
to a high accuracy for $N=128$ and $N=256$. For $N=128$ the
spatial error dominates the total error, so that the most
convenient time integration scheme is one able to provide such
accuracy with a large time step. These requirements are fulfilled
by the $(30,6,1.4)$ method, especially designed to be used with
$\theta' = 1.4$, whereas  scheme $(30,0,1)$ gives us higher accuracy than
necessary and with more computational cost. Method
$(30,6,1.4)$ can be used with a time step $\tau$ about a 40$\%$
larger than method $(30,0,1)$, and thus its computational cost is reduced approximately 
by this factor .

However, for $N=256$ the spatial error reaches nearly round off
accuracy, and it could be convenient to employ methods able to
provide this accuracy with the minimal computational cost. Notice
that in this case  the error committed by the 30-stage method with
$\theta'=1$,  $(30,0,1)$, is still larger than the spatial error.
In consequence, it makes sense integrating in time with a method specially
designed to be used with a smaller time step. Thus, in particular,
we reach round off accuracy with the 30-stage
method  $(30,0,0.75)$ ($\theta'=0.75$) which is nearly twice more
expensive than the method with $\theta'=1.4$.

 We have also performed here the time integration with the 12th-order
scheme GM12, which in the case of $N=128$ requires a
scaled time step of $\theta'=0.49$ to give a precision similar to
that obtained by $(30,6,1.4)$ with $\theta'=1.4$. With $N=256$,
GM12 must be applied with $\theta'=0.19$ to achieve the
precision obtained by  $(30,0,0.75)$ with $\theta'=0.75$, thus
requiring approximately four times more FFT calls.

%For the 30-stage 0th-order methods
%we have denoted by N$_{30}^a0$, N$_{30}^b0$ and N$_{30}^c0$, for
%the schemes corresponding to $\theta'=1,1.75,0.5$, respectively.

\section{Concluding remarks}
\label{sec.6}

The time integration of the Schr\"odinger equation previously
discretized in space has been extensively studied in the
literature. This is essentially equivalent to approximate 
$u(t)=\e^{-itH}u(0)$, where $H$ is a real symmetric matrix and $u(0)$ 
represents the discrete wave function. In this work we propose 
using symplectic splitting integration
methods to get this approximation.  The main difference with
standard polynomial approximations is that in the products
$Hv=H\,\mbox{Re}(v)+i \, H\,\mbox{Im}(v)$, the real and imaginary parts are computed
sequentially instead of simultaneously (i.e., the computation of
the real part is used in the computation of the imaginary part and
vice versa in consecutive stages). These schemes are
conjugate to unitary methods, so that  the errors in norm and
energy do not grow secularly \cite{blanes08otl}.

To carry out the integration, one divides the whole time interval into $n$ steps
of length $\tau=t/n$ and applies an $m$-stage method at each time
step. The total computational cost of the method is measured by the product $n \, m$
instead of $m$. The analysis carried out in this paper allows us, in particular,
to construct a particular symplectic splitting scheme of the
form (\ref{compos}) which minimizes the total cost $n \, m$, given a
a prescribed tolerance, the
spectral radius $\rho(H)$ of the corresponding Hamiltonian matrix
$H$ and the norm of its action on the initial condition, $\|H^k
u(0)\|$.  We
have observed that the optimal methods in this sense have relatively
large values of $m$. We can choose the most appropriate method for
each problem, i.e. the method, $(m,r,\theta')$, with the largest
value of $\theta'$ which provides the desired accuracy for a given
problem.

The error analysis of splitting methods provided here allows one
to get \textit{a priori} bounds on the propagating error when
numerically integrating with a given time step which are
comparable to similar estimates for the space discretization
error. Moreover, it permits to construct new classes of schemes
with a large stability interval specifically designed to be used
with a certain (large) time step in such a way that the accuracy
is similar to the spatial discretization error for a given space
regularity. The main ingredients in the process are again the values of
$\rho(H)$, $\|H^k u_0\|$, and the estimate provided by
Theorem~\ref{th:error}. The numerical examples considered
illustrate the validity of our approach. In particular, they show that 
there are methods in this family which are competitive with other
standard procedures, such as Chebyshev and Lanczos methods.

By following this procedure it is indeed possible to generate a
list of integration schemes specifically designed to be used under
different regularity conditions on the initial state and the
Hamiltonian matrix which involve in each case an error comparable
to that coming from the spatial discretization. It is our purpose
in a forthcoming paper \cite{blanes10splitt} to elaborate an
algorithm in such a way that, given a prescribed tolerance, an
initial state $u_0$ and a Hamiltonian matrix $H$, automatically
selects the most efficient time integration method in this family
fulfilling the requirements supplied by the user. Moreover, we
will also carry out a detailed numerical study of this family of
splitting methods and the proposed automatic algorithm in
comparison with the
Chebyshev polynomial expansion scheme and the
Lanczos iteration method.

\subsection*{Acknowledgements}

This work has been partially supported by Ministerio de
Ciencia e Innovaci\'on (Spain) under project MTM2007-61572
(co-financed by the ERDF of the European Union). Additional financial support from the 
Generalitat Valenciana through project GV/2009/032 (SB), Fundaci\'o Bancaixa (FC)
 and Universidad del Pa\'{\i}s
Vasco/Euskal Herriko Uniberstsitatea through project EHU08/43 (AM) is also acknowledged.

\bibliographystyle{plain}
\bibliography{ourbib,spectral,geom_int,numerbib}

\end{document}